\documentclass[12pt]{amsart}

\usepackage{amsmath, amssymb, amsfonts, amsthm}
\usepackage{mathrsfs, bm, upgreek}
\usepackage{enumitem}
\usepackage{url}
\usepackage{booktabs}
\usepackage{tikz}
\usepackage{tikz-cd}
\usepackage{todonotes}
\usepackage[normalem]{ulem}
\usepackage{marginnote}

\usepackage{float}
\floatplacement{figure}{H}

\newcommand{\brho}{{\bm{\uprho}}}

\newcommand{\R}{\mathbb{R}}
\newcommand{\C}{\mathbb{C}}
\newcommand{\F}{\mathbb{F}}
\newcommand{\OK}{\mathscr{O}_K}
\newcommand{\OKS}{\mathscr{O}_{K,S}}
\newcommand{\Z}{\mathbb{Z}}
\newcommand{\Q}{\mathbb{Q}}
\newcommand{\Lat}{\mathscr{L}}
\newcommand{\bx}{\mathbf{x}}
\newcommand{\by}{\mathbf{y}}

\newcommand{\fp}{\mathfrak{p}}

\newcommand{\RNb}{500}

\DeclareMathOperator{\rfv}{rfv}
\DeclareMathOperator{\lcm}{lcm}
\DeclareMathOperator{\Disc}{Disc}
\DeclareMathOperator{\ord}{ord}

\numberwithin{table}{section}

\theoremstyle{plain}
\newtheorem{theorem}{Theorem}[section]
\newtheorem{lemma}[theorem]{Lemma}
\newtheorem{corollary}[theorem]{Corollary}

\theoremstyle{definition}
\newtheorem{definition}{Definition}[section]
\newtheorem{problem}[definition]{Problem}
\newtheorem{remark}[definition]{Remark}
\newtheorem{algorithm}[definition]{Algorithm}

\title[Solving $S$-unit equations]{A robust implementation for solving the $S$-unit equation and several applications}

\author[Alvarado et al.]{Alejandra Alvarado}
\address{Alejandra Alvarado, Department of Mathematics and Computer Science, Eastern Illinois University}
\email{aalvarado2@eiu.edu }
\author[]{Angelos Koutsianas}
\address{Angelos Koutisanas, Department of Mathematics, University of British Columbia}
\email{koutsis.jr@gmail.com}
\author[]{Beth Malmskog}
\address{Beth Malmskog, Department of Mathematics and Computer Science, Colorado College}
\email{beth.malmskog@gmail.com}
\author[]{Christopher Rasmussen}
\address{Christopher Rasmussen, Department of Mathematics and Computer Science, Wesleyan University}
\email{crasmussen@wesleyan.edu}
\author[]{Christelle Vincent}
\address{Christelle Vincent, Department of Mathematics and Statistics, University of Vermont}
\email{christelle.vincent@uvm.edu}
\author[]{Mckenzie West}
\address{Mckenzie West, Department of Mathematics, University of Wisconsin Eau Claire}
\email{WestMR@uwec.edu}

\begin{document}

\begin{abstract}
Let $K$ be a number field, and $S$ a finite set of places in $K$ containing all infinite places. We present an implementation for solving the $S$-unit equation $x + y = 1$, $x,y \in \OKS^\times$ in the computer algebra package SageMath. This paper outlines the mathematical basis for the implementation. We discuss and reference the results of extensive computations, including exponent bounds for solutions in many fields of small degree for small sets $S$. As an application, we prove an asymptotic version of Fermat's Last Theorem for totally real cubic number fields with bounded discriminant where 2 is totally ramified.  In addition, we use the implementation to find all solutions to some cubic Ramanujan-Nagell equations.
\end{abstract}

\maketitle

\section{Introduction}\label{sec:intro}

In 1909, Thue proved there are only finitely many integral solutions to what we now call the Thue equation; i.e, that for any $\mathbb{Q}$-irreducible binary form $F(X,Y)$ of degree at least 3, defined over the integers, there are only finitely many solutions $(x,y)\in\mathbb{Z}^2$ to the equation \[F(x,y)=c,\] where $c$ is any non-zero integer \cite{Thue:1909}. Thue accomplished this by formally factoring $F$ into linear terms of the form $(x-\alpha y)$, where $\alpha$ is algebraic, then bounding the quality of rational approximations of $\alpha$ in terms of the size of $x$ and $y$.  Thus bounds on integer solutions to the Thue equation arose out of the theory of approximating algebraic numbers by rationals. Thue's theorem was generalized by Siegel \cite{Siegel:1929}\footnote{See also the recent translation \cite{Fuchs:2014} by Fuchs.} and then Mahler \cite{Mahler:1933}.  These generalizations gave rise to a central fact of modern computational number theory: if $K$ is a number field, and $S$ a finite list of places of $K$ including all infinite places, then there are only finitely many solutions $(x,y)$ to the equation
\begin{equation}\label{introeqn}
x + y = 1, \qquad x, y \in \OKS^\times.
\end{equation}
Here, $\OKS^\times$ is the unit group of the ring $\OKS$ of $S$-integers in $K$. We refer to \eqref{introeqn} as the \emph{$S$-unit equation}. In this paper, we describe an algorithm to determine the complete set of solutions to the $S$-unit equation for general $K$ and $S$. More generally, for fixed $a,b \in \OKS$, we can see that the equation $ax + by = 1$ will also have only finitely many solutions by expanding the set $S$ to include all primes dividing $a$ and $b$ and searching for solutions to  \eqref{introeqn}. Thus it suffices to solve \eqref{introeqn} to address the more general case, and we focus on \eqref{introeqn} here (though it should be remarked that this is not the most efficient way to solve $ax + by = 1$).

The work of Gelfond and Schneider, resolving Hilbert's seventh problem in the affirmative (all irrational algebraic powers of algebraic numbers are transcendental once trivial cases are ignored), determined lower bounds on the absolute value of a $\mathbb{Q}$-linear combination of  two $\mathbb{Q}$-linearly independent logarithms of algebraic numbers.  Alan Baker's 1967 theorem \cite{Baker:1967} generalized these results to the case of many logarithms. Baker, W\"{u}stholz, and many others continued to improve these bounds. Naturally, one should ask if similar results are available over local fields, and indeed such results began to appear quickly. In 1968, Brumer proved the first analogue of Baker's work for $p$-adic logarithms \cite{Brumer:1967}, followed by many improvements and generalizations, such as the results of Yu \cite{Yu:1989}. Improvements in both the archimedean and nonarchimedean cases continue to appear, such as in \cite{GyoryYu:2006, Baker-Wustholz:2007, Yu:2007, Gyory:2019}.

For any choice of $K$ and $S$, $\OKS^\times$ is a finitely generated $\Z$-module. Fixing a basis $\rho_1, \dots, \rho_t$ for the torsion free part, we can express any $x \in \OKS^\times$ as $x = \xi \cdot \prod_{i=1}^t \rho_i^{a_i}$ for some root of unity $\xi \in K$ and some $a_i \in \Z$. Building on the lower bounds for linear combinations of logarithms, Gy\H{o}ry \cite{Gyory:1979} determined effectively computable bounds for the exponents $a_i$. This was a great victory for computational number theory, as this provably restricted all solutions to \eqref{introeqn} to a finite search space. Unfortunately, the demonstrated bounds were enormous and as a matter of practice, it was computationally infeasible to conduct an exhaustive search for solutions, even in the very simplest cases. Baker and Davenport devised a clever method of reducing the bounds in special cases in \cite{BakerDavenport:1968}. However, in \cite{deWeger:1989}, de Weger built on the ideas of Baker-Davenport to develop a powerful general method of algorithmically reducing the bounds to a manageable size, relying on the lattice basis reduction algorithm of Lenstra, Lenstra, and Lov\'{a}sz \cite{LLL:1982} (henceforth referred to as the ``LLL algorithm''). Though it is has not been proven that de Weger's method will always reduce the bounds coming from the results in linear forms of logarithms, this is the rule in practice. In many cases, de Weger's approach provides sufficient improvements that, with careful sieving (or sometimes even with only brute force), the entire search space can be  exhausted and complete lists of solutions can be enumerated.

Beyond the improvements provided by LLL-based reduction, many mathematicians have developed further algorithms for efficiently searching below the ``LLL bounds'' provided by de Weger's work. Two powerful examples are reported in \cite{Wildanger:2000} and \cite{Smart:1999}. Increasingly, the theoretical improvements (assisted by technological improvements) have pushed ambitious and interesting computational problems within reach. For example, Smart determined the entire set of all genus $2$ curves over $\Q$ with good reduction away from $2$, based in part on solving \eqref{introeqn} for a family of number fields unramified away from $2$ \cite{Smart:1997}.

We have written a package of Python functions for inclusion in the computer algebra system SageMath \cite{SAGE}, which solves the $S$-unit equation \eqref{introeqn} over any number field $K$ and for any finite set $S$ of finite places. As experienced readers may expect, the package is not practical when either $[K:\Q]$ or $|S|$ is too large, although there is no theoretical obstruction. While this package is the independent creation of the authors, it is based in part on the descriptions of algorithms implemented by Smart \cite{Smart:1995, Smart:1997, Smart:1998}. Specifically, we follow Smart's development in determining initial large bounds, including the numbering of constants, in \cite{Smart:1995}, with some adjustments and small corrections. In reducing the bounds, we follow \cite{Smart:1998}, again with some adjustments. The sieving step is based on ideas cited by Smart \cite{Smart:1997} as due to others (as noted in Section \ref{sec:sieve}) but has been redeveloped in new notation and style. We include proofs of our versions of results when we made adjustments to versions in the literature. To the authors' knowledge, our package is the first publicly available implementation for solving the $S$-unit equation over any field other than $\Q$; the present article describes the algorithm and its implementation.  The implementation was a highly non-trivial undertaking, involving efforts spreading over more than seven years on the parts of individuals and the entire team.

We also provide new results facilitated by our implementation. In particular, we first provide a discussion of and link to explicit exponent bounds for solutions of the $S$-unit equation in all cases $(K,S)$ where $K/\mathbb{Q}$ is ramified only at primes above some subset of $\{2, 3\}$ and
\[ [K:\Q] \leq 5, \qquad S \subseteq \{ \mathfrak{p} \subseteq \OK : \mathfrak{p} \mid 6 \}. \] We improve the best known exponent bounds for solutions of the $S$-unit equation over number fields related to a class of genus $2$ curves over $\Q$ with good reduction away from $3$. We solve the $S$-unit equation in the $13$ totally real cubic number fields $K$ in which $2$ is totally ramified and the absolute discriminant of $K$, $\Delta_K$, satisfies $|\Delta_K| \leq 2000$, and we use these results to verify that an asymptotic version of Fermat's Last Theorem holds over these fields. Finally, we find all solutions to certain cubic Ramanujan-Nagell equations.

\subsection{Overview}

The organization of the paper proceeds as follows. We introduce certain notations in \S\ref{sec:notation}. In \S\ref{sec:boundintro}, we review the relevant work of Baker-W\"ustholz and Yu. This is used in \S\ref{sec:initial_bound} to establish a ``pre-LLL'' exponent bound for each place in $S$. In \S\ref{sec:LLL_red}, we explain the process of using LLL to reduce these exponent bounds -- the approach is different for archimedean and nonarchimedean places. In \S\ref{sec:sieve}, we describe the sieve for further constraining the final search space. We devote \S\ref{sec:observ} to a discussion of our experimental observations, having now executed our algorithm in several dozen cases. We highlight a special condition ($S$ contains only one finite place) under which a significant improvement in the search space can be obtained. Although narrow in scope, the special condition is sufficiently natural, and the savings sufficiently nontrivial, as to warrant its discussion.
Finally, \S\ref{sec:applications} introduces two applications: an asymptotic version of Fermat's Last Theorem over totally real cubic fields and a solution to a cubic variant of the Ramanujan-Nagell equation.

\subsection*{Acknowledgments}

We are delighted to recognize the Institute for Computational and Experimental Research in Mathematics for both funding and hosting a 2017 collaboration during which a great deal of this project was completed. Part of this work began at the 2014 workshop SageDays 62, and we would like to thank Anna Haensch and Lola Thompson for organizing that workshop and Microsoft Research and The Beatrice Yormark Fund for Women in Mathematics for funding. Some of the work was supported by the van Vleck fund at Wesleyan University. The authors would like to thank many people for helpful conversations that led to improvements in the code and gave direction to this project, including Bjorn Poonen, Andrew Sutherland, and Norman Danner.  We would also like especially to thank David Roe for his contributions to refining and reviewing the code for inclusion in SageMath.  The third author was partially supported in this work by NSA Grant \#H98230-16-1-0300. We are very grateful to the anonymous referees for their careful reading of this work and their many helpful comments which have improved the quality of this paper.

\section{Notation}\label{sec:notation}

\subsection{$S$-units in number fields} Throughout this paper, we let $\bar{\Q}$ denote the algebraic closure of $\Q$ inside $\C$, the field of complex numbers. Unless stated otherwise, we fix the following notation throughout:
\begin{center}
  \begin{tabular}{rcp{0.72\textwidth}}
    $K$                   &  & a number field (assumed to be a subfield of $\bar{\Q}$), \\
    $d_K$                 &  & the absolute degree $[K:\Q]$ \\
    $w$                   &  & the number of distinct roots of unity in $K$ \\
    $\Delta_K$            &  & the absolute discriminant of $K/\Q$ \\
    $\OK$                 &  & the ring of integers of $K$ \\
    $e_\mathfrak{p}$ & & the ramification index of $\mathfrak{p}$ in $K/\Q$ \\
    $f_\mathfrak{p}$      &  & the inertial degree of the prime $\mathfrak{p} \subseteq \OK$ over the rational prime $\mathfrak{p} \cap \Z$ \\
    $r$                   &  & the rank of $\OK^\times$ as a $\Z$-module \\
    $S_\mathrm{fin}$      &  & a set $\{\mathfrak{p}_1, \dots, \mathfrak{p}_s\}$ of $s$ finite places of $K$ \\
    $S_\infty$            &  & the set $\{ \mathfrak{p}_{s+1}, \dots, \mathfrak{p}_{r+s+1} \}$ of all infinite places of $K$ \\
    $S$                   &  & $S_\mathrm{fin} \cup S_\infty = \{\mathfrak{p}_1, \dots, \mathfrak{p}_{r+s+1} \}$ \\
    $S_\Q$                &  & the set of places of $\Q$ which extend to places of $K$ in $S$ \\
    $\OKS$                &  & the ring of $S$-integers in $K$ \\
    $\OKS^\times$         &  & the group of $S$-units in $K$ \\
    $t$                   &  & the rank of $\OKS^\times$ as a $\Z$-module (so $t = r + s$) \\
    $\rho_0$              &  & a root of unity generating the torsion part of $\OKS^\times$ \\
    $\rho_1,\dots,\rho_t$ &  & an ordered basis for the torsion-free part of the $\Z$-module $\OKS^\times$ \\
    $\brho$               &  & the ordered list $[ \rho_0, \rho_1, \dots, \rho_t ]$ \\
  \end{tabular}
\end{center}
 If $f(x) \in \Z[x]$ is a monic and irreducible polynomial, we let $K_f$ denote the number field $\Q(\xi)$, where $\xi$ is a root of $f(x)$. Always, $\log$ denotes the principal branch of the complex logarithm function, with argument in $(-\pi, \pi]$.

\subsection{Absolute Values and Completions}\label{sec:abs_vals}
Each place of $K$ determines an associated value, $| \cdot |_\mathfrak{p}$, which we now describe.

Let $|\cdot|$ denote the usual absolute value on $\C$. If $\mathfrak{p}$ is an infinite place, choose $\sigma_\mathfrak{p} \colon K \to \C$, an embedding corresponding to $\mathfrak{p}$. The associated absolute value depends on whether $\mathfrak{p}$ is a real or complex (meaning non-real) place of $K$:
\[ \left| \alpha \right|_{\mathfrak{p}} := \begin{cases} \left| \sigma_\mathfrak{p}(\alpha) \right| & \mathfrak{p}\ \text{is real}, \\ \left| \sigma_\mathfrak{p}(\alpha) \right| & \mathfrak{p}\ \text{is complex.} \end{cases} \]

Now suppose $\mathfrak{p}$ is a finite place. View $\mathfrak{p}$ as a prime ideal of $\OK$, and let $p$ be the characteristic of the residue field $\OK/\mathfrak{p}$. Let $\ord_\mathfrak{p}$ denote the ordinal function for $\mathfrak{p}$. On $\OK^\times$ this is defined by
\[ \ord_\mathfrak{p}(\beta) = m\ \quad \text{if}\ \beta \in \mathfrak{p}^m - \mathfrak{p}^{m+1}, \]
and it extends to $K^\times$ in the obvious way. We let $|\cdot|_p$ denote the usual absolute value of the $p$-adic field $\Q_p$. The absolute value associated to $\mathfrak{p}$ on $K$ is
\[ \left| \alpha \right|_\mathfrak{p} := p^{-f_\mathfrak{p} \ord_\mathfrak{p}(\alpha)}. \]
Let $K_\mathfrak{p}$ be the $\mathfrak{p}$-adic completion of $K$ with respect to $|\cdot|_\mathfrak{p}$; we also use $|\cdot|_\mathfrak{p}$ for the absolute value on $K_\mathfrak{p}$.

We fix once and for all an algebraic closure $\bar{\Q}_p$ of $\Q_p$, and let $\C_p$ denote the completion of $\bar{\Q}_p$. We use $|\cdot|_p$ to denote the natural extension of $|\cdot|_p$ to all of $\C_p$. We define $\ord_p$ on $\C_p^\times$ to satisfy
\[ |\alpha|_p = p^{-\ord_p \alpha}, \qquad \alpha \in \C_p^\times. \]
As $p\OK$ may split into several prime ideals, the absolute value $|\cdot|_p$ on $\Q$ may have several inequivalent extesnions to $K$, of which $|\cdot|_\mathfrak{p}$ is just one; so we must take care when viewing $K_\mathfrak{p}$ as a subfield of $\bar{\Q}_p$.

For any embedding $\vartheta \colon K \to \bar{\Q}_p$, we obtain a subfield of $\bar{\Q}_p$ as the composite $\vartheta(K) \cdot \Q_p$. By the Prolongation Theorem \cite[\S18.5]{Hasse:1980}, there exists a choice of $\vartheta$ such that ${(K_\mathfrak{p}, |\cdot|_\mathfrak{p})}$ is value-isomorphic to ${(\vartheta(K)\Q_p, |\cdot|_p)}$. Henceforth, we always use this isomorphism to view $K_\mathfrak{p}$ as a subfield of $\bar{\Q}_p$. As the isomorphism respects the valuations, we know $\ord_p$ and $\ord_\mathfrak{p}$ satisfy
\begin{equation}\label{eq:ord_p_ord_frak_p}
\ord_{\mathfrak{p}} \beta = e_{\mathfrak{p}} \ord_p \beta, \qquad \beta \in K_\mathfrak{p}.
\end{equation}

\subsection{Height functions}\label{sec:ht_funcs}
Suppose $n \geq 1$. We let $h$ denote the standard logarithmic Weil height on $\mathbb{P}^n(K)$. This is defined as follows: for any ${\mathbf{x} = (x_0 : \cdots : x_n)} \in \mathbb{P}^n(K)$,
\[ h(\mathbf{x}) = \frac{1}{d_K}\sum_\mathfrak{p} \log\, (\max_j \,\{ |x_j|_{\mathfrak{p}} \} ), \]
where the sum runs over all places of $K$. It is a consequence of the product formula (\cite[Ch.~20, pgs.~326--327]{Hasse:1980}) that $h(\mathbf{x})$ is independent of the choice of coordinates for $\mathbf{x}$.  For any $\alpha \in K$, set ${h(\alpha)=h((1 : \alpha))}$.  Note that this height is \emph{absolute} in the sense that it is not dependent on which field extension $K$ containing the coordinates of $\mathbf{x}$ is considered.

We introduce a modified version of this height function, used in \S\ref{sec:boundintro}. Suppose $\alpha_1, \dots, \alpha_n \in K$, and let $K' = \Q(\alpha_1, \dots, \alpha_n) \subseteq K$. For any nonzero element $\beta \in K'$, we define the function $h'$ by
\[ h'(\beta) = \frac{1}{d_{K'}} \max \left\{ d_{K'} \cdot h(\beta), \left| \log \beta \right|, 1 \right\}. \]
The definition of another height function, $h_\mathfrak{p}$, is slightly more technical and will be introduced when needed in \S\ref{sec:boundintro}.

\subsection{$p$-adic logarithms}\label{section:plog}
Inside $\C_p$, consider the open disk
\[ \Delta_1 := \{z \in \C_p : |z - 1|_p < 1 \}. \]
On $\Delta_1$, we define the $p$-adic logarithm by the series
\begin{equation}\label{eqn:log_series}
\log_p z = -\sum_{n\geq 1}\frac{(1-z)^n}{n}.
\end{equation}
The series is convergent on $\Delta_1$; moreover, on $\Delta_1$ it satisfies the identity
\begin{equation}\label{eqn:logs}
  \log_p(xy)=\log_p x + \log_p y.
\end{equation}
If $|z|_p < p^{-\frac{1}{p-1}}$ we have
\begin{equation}\label{eq:ord_logp}
\ord_p \left( \log_p(1+z) \right) = \ord_p z.
\end{equation}
Based on an idea due to Iwasawa, the $p$-adic logarithm can be extended to any $z \in \C_p$ such that $|z|_p=1$; this extension continues to satisfy \eqref{eqn:logs} (see \cite[II.2.4]{Smart:1998}).

\subsection{Solutions to the $S$-unit equation}\label{sec:sols_Sunit}
We let $A_{K,S}$ denote the additive $\Z$-module $(\Z/w\Z) \times \Z^t$. This is isomorphic to $\OKS^\times$, and the list of generators $\brho$ determines an isomorphism
\[ \Phi_\brho \colon A_{K,S} \longrightarrow \OKS^\times, \qquad \mathbf{a} := (a_0, a_1, \dots, a_t) \mapsto  \prod_{i=0}^t \rho_i^{a_i}. \]
We use the shorthand $\brho^\mathbf{a} := \Phi_\brho(\mathbf{a})$. For obvious reasons, we call the elements of $A_{K,S}$ \emph{exponent vectors}. Much of our discussion will focus on bounds for the entries of an exponent vector. For $\mathbf{a} \in A_{K,S}$, we use the notation $|\mathbf{a}| \leq B$ to signify
\[ \max_{0 \,<\, i \,\leq\, t} \; \left| a_i \right| \leq B. \]
Within $\OKS^\times$, we wish to determine
\[ X_{K,S} := \{\tau \in \OKS^\times : 1 - \tau \in \OKS^\times \}. \]
Solving the $S$-unit equation is equivalent to determining the set $X_{K,S}$. We let $E_{K,S}$ denote the corresponding subset $\Phi_\brho^{-1}(X_{K,S})$ of $A_{K,S}$.

\section{The Bounds of Baker-W\"ustholz and Yu}\label{sec:boundintro}

Suppose $\tau_1, \tau_2 \in \OKS^{\times}$ provide a solution to the $S$-unit equation, so that $\tau_1 + \tau_2 = 1$. With respect to the ordered generating set $\brho$, there are unique vectors $\mathbf{b}_i = (b_{i,0}, \dots, b_{i,t}) \in A_{K,S}$ such that
\begin{equation}\label{eqn:tau_i}
\tau_i = \brho^{\mathbf{b}_i} = \prod_{j=0}^t \rho_j^{b_{i,j}}, \qquad i = 1, 2.
\end{equation}
The techniques of lattice reduction discussed in \S\ref{sec:LLL_red} will not produce an absolute bound for $\left| b_{i,j} \right|$ on their own; they can only be used to improve a known bound. So in this section, we recall bounds established by Baker-W\"ustholz \cite{Baker-Wustholz:1993} and Kunrui Yu \cite{Yu:2007}.
An excellent treatment of the background material appears in \cite{Evertse-Gyory:2015}.

\subsection{Statement of Yu's Bound}
Let $\mathfrak{p}$ be a finite place of $K$, and let $p$ denote the rational prime below $\mathfrak{p}$. We let $q$ be the smallest rational prime distinct from $p$ (so $q = 2$ unless $p = 2$, in which case $q = 3$).
Let $\zeta_m := \exp(2 \pi i / m)$. We say $K$ \emph{satisifies Yu's auxiliary condition} if any of the following hold:
\begin{enumerate}[label={(\roman*)}]
  \item $q = 2$ and $p^{f_\mathfrak{p}} \equiv 1 \bmod{4}$,
  \item $q = 2$ and $\zeta_4 \in K$,
  \item $q = 3$ and $\zeta_3 \in K$.
\end{enumerate}
At the end of this section, we explain how the algorithm finds a bound in cases where $K$ does not satisfy Yu's auxiliary condition.
\begin{theorem}[Yu, {\cite[pg.~190]{Yu:2007}}]\label{thm:Yu_2007_bound}
Suppose $n \geq 1$ and $\mathfrak{p}$ is a prime of $\OK$. Suppose $K$ is a number field satisfying Yu's auxiliary condition and $\mu_0, \mu_1, \dots, \mu_{n-1} \in K^\times$ are chosen which satisfy
\begin{equation}\label{eqn:ord_p_is_zero}
  \ord_\mathfrak{p} \mu_j = 0, \qquad 0 \leq j \leq n-1.
\end{equation}
Suppose $b_j \in \Z$ and $\Theta := \prod\limits_{j=0}^{n-1} \mu_j^{b_j} \neq 1$. Finally, suppose $B$ satisfies
\[ B \geq \max \{ |b_0|, \dots, |b_{n-1}|, 3 \}. \]
Then there exist explicit constants $C_1^*$ and $\Omega$, given below, such that
\[ \ord_\mathfrak{p} \left( \Theta - 1 \right) < C_1^* \Omega \log B. \]
\end{theorem}

\subsection{The constants $\Omega$ and $\Omega'$}

We first discuss the constant $\Omega$, and the variant $\Omega'$ used in the algorithm. In Theorem \ref{thm:Yu_2007_bound}, $\Omega$ is roughly a product of the logarithmic heights of the $\mu_j$. More precisely, decompose the set $\{\mu_j\}_j$ into a disjoint union $\mathfrak{a} \cup \mathfrak{b}$, where $\mathfrak{a}$ is a maximal subset of $\{\mu_j\}_j$ which is multiplicatively independent. Such a decomposition need not be unique. Because of the possible dependence among the $\mu_j$, Yu requires a modified height function:
\[ h_\mathfrak{p}(\mu) := \max \left\{ h(\mu), \frac{ f_\mathfrak{p} }{\kappa_1 (n + 4) d_K} \right\}. \]
(The value $\kappa_1$ is explained in the following subsection.) The constant $\Omega$, which depends on $n$, $d_K$, $\mathfrak{p}$ as well as the $\mu_j$, is then defined by
\[ \Omega(n, d_K, \mathfrak{p}) := \prod_{\mu \in \mathfrak{a}} h(\mu) \cdot \prod_{\mu \in \mathfrak{b}} h_\mathfrak{p}(\mu). \]
As shown in \cite{Yu:2007}, one may choose any maximal independent set $\mathfrak{a}$ for the computation of $\Omega$. If optimization of the bound is critical, one may search over all possible $\mathfrak{a}$ and take the smallest possible bound. This observation is moot in our use, however.
\begin{corollary}\label{corollary:Yu_bound}
Keeping the hypotheses of the previous theorem, suppose also that $\mu_1,\dots,\mu_{n-1}$ are multiplicatively independent. Set
\[ \Omega'(n, d_K, \mathfrak{p}) := h_\mathfrak{p}(\mu_0) \prod_{j=1}^{n-1} h(\mu_j). \]
Then $\displaystyle \ord_\mathfrak{p} \left( \Theta - 1 \right) < C_1^* \Omega' \log B$.
\end{corollary}
\begin{proof}
In this case, $\mathfrak{b}$ is unique; either $\mathfrak{b} = \{\mu_0\}$ or $\mathfrak{b} = \varnothing$. In either case, $\Omega \leq \Omega'$ and the result follows immediately.
\end{proof}
In the algorithm, we are always in the situation of the Corollary. Rather than decide the question of independence between $\mu_0$ and the other $\mu_j$, we just  use the constant $\Omega'$.

\subsection{The constant $C_1^*$}
The value of $C_1^* := C_1^*(n,d_K,\mathfrak{p})$ is dependent on $n$, $d_K$, and $\mathfrak{p}$, as follows. Let $u := \ord_q w$, so that $q^u$ is the $q$-part of $w$. Set
\begin{align*}
  k_2 & := c^{(1)} a^{(1)} \cdot \frac{ n^n\cdot(n+1)^{n+1} }{n!}, \\
  k_3 & := \frac{ p^{f_\mathfrak{p}} }{ q^u } \left( \frac{d_K}{f_\mathfrak{p} \log p} \right)^{n+2} \cdot \log \max \{d_K, e \}, \\
  k_4 & := \max \left\{ \log \left( e^4 (n + 1) d_K \right), e_\mathfrak{p}, f_\mathfrak{p} \log p \right\}.
\end{align*}
Here, $e$ denotes the base of the natural logarithm. The constants $a^{(1)}$, $\kappa_1$, and $c^{(1)}$ are given in Tables \ref{table:a1_kappa1} and \ref{table:c1}. Finally,
\begin{equation}\label{eqn:C1_star}
  C_1^*(n, d_K, \mathfrak{p}) := (n+1) k_2 k_3 k_4.
\end{equation}

\begin{table}[!ht]
  \centering
  \caption{The constants $a^{(1)}$ and $\kappa_1$}
  \label{table:a1_kappa1}
  \begin{tabular}{lccr}
    \toprule
    Case && $a^{(1)}$ & $\kappa_1$ \\[1pt]
    \midrule
    $p = 2$ && $32$ & $40$ \\[2pt]
    $p = 3$ && $16$ & $20$ \\[2pt]
    $p > 3$ and $e_\mathfrak{p} \geq 2$ && $16$ & $20$ \\[2pt]
    $p > 3$ and $e_\mathfrak{p} = 1$ && $\tfrac{8(p-1)}{p-2}$ & $10$ \\
    \bottomrule
  \end{tabular}
\end{table}

\begin{table}[!ht]
  \centering
  \caption{The constant $c^{(1)}$}
  \label{table:c1}
  \begin{tabular}{lcrclcr}
    \toprule
    \multicolumn{3}{c}{$p \leq 5$} && \multicolumn{3}{c}{$p > 5$} \\
    \cmidrule{1-3} \cmidrule{5-7}
    Case && $c^{(1)}$ && Case && $c^{(1)}$ \\
    \midrule
    $p = 2$ && $160$ && $p \equiv 1\ (4)$ and $e_\mathfrak{p} = 1$ && $1473$ \\
    $p = 3$ and $d_K = 1$ && $537$ && $p \equiv 1\ (4)$ and $e_\mathfrak{p} \geq 2$ && $1502$ \\
    $p = 3$ and $d_K \geq 2$  && $759$ && $p \equiv 3\ (4)$, $e_\mathfrak{p} = 1$, $d_K = 1$ && $1288$ \\
    $p = 5$ and $e_\mathfrak{p} = 1$ && $1473$ && $p \equiv 3\ (4)$, $e_\mathfrak{p} = 1$, $d_K \geq 2$ && $1282$ \\
    $p = 5$ and $e_\mathfrak{p} \geq 2$ && $319$ && $p \equiv 3\ (4)$, $e_\mathfrak{p} \geq 2$ && $2190$ \\
    \bottomrule
  \end{tabular}
\end{table}

\subsection{A Remark about implementation}

For this subsection only, suppose all hypotheses in Theorem \ref{thm:Yu_2007_bound} are satisfied, except $K$ does \textbf{not} satisfy Yu's auxiliary condition. Set
\[ K' := \begin{cases} K(\zeta_4) & q = 2, \\ K(\zeta_3) & q = 3. \end{cases} \]
Let $\mathfrak{P}$ be a prime of $\mathscr{O}_{K'}$ above $\mathfrak{p}$. Let $e_{\mathfrak{P} \mid \mathfrak{p}}$ be the ramification index of $\mathfrak{P}$ over $\mathfrak{p}$. Because
\[ \ord_\mathfrak{p} \alpha = e_{\mathfrak{P} \mid \mathfrak{p}} \ord_\mathfrak{P} \alpha, \qquad \alpha \in K^\times, \]
we see $\ord_\mathfrak{P} \mu_j = 0$ for all $j$. Now Theorem \ref{thm:Yu_2007_bound} applies with $K'$ and $\mathfrak{P}$ in place of $K$ and $\mathfrak{p}$, respectively.
\begin{corollary}\label{corollary:K_not_aux}
Under the conditions of this subsection,
\[ \ord_\mathfrak{p} \left( \Theta - 1 \right) < e_{\mathfrak{P} \mid \mathfrak{p}} \cdot C_1^*(n, d_{K'}, \mathfrak{P}) \cdot \Omega'(n, d_{K'}, \mathfrak{P}) \cdot \log B. \]
\end{corollary}
Note that even if $\mathfrak{p}$ splits as $\mathfrak{P}\mathfrak{P}'$ in $K'$, the choice of $\mathfrak{P}$ is irrelevant; both give the exact same bound in the Corollary.

\subsection{Bound of Baker-W\"ustholz}
We now give an effective version of Baker's theorem. (Notations are as in \S\ref{sec:abs_vals}, \ref{sec:ht_funcs}.)
\begin{theorem}[Baker-W\"ustholz, {\cite[pg.~20]{Baker-Wustholz:1993}}]\label{thm:BW}
  Let $L$ be a linear form in $t+1$ indeterminates,
  \[ L(z_0, \dots, z_t) = b_0 z_0 + \cdots + b_t z_t, \qquad b_i \in \Z. \]
Let $B=\max \{ |b_0|, \dots, |b_t| \}$, and let $\rho_0, \dots, \rho_t \in \overline{\mathbb{Q}} - \{0, 1\}$. Let $K^{\prime}$ be the subfield of $\overline{\mathbb{Q}}$ generated by the $\rho_i$. If $B>3$ and
\[ \Lambda = L(\log \rho_0, \log \rho_1, \dots, \log \rho_t) \neq 0, \]
then
\[ \log |\Lambda| > -C(t, d_{K^{\prime}}) \log (B) \prod_{j=0}^t h'(\rho_j), \]
where the constant $C(t, d_{K^{\prime}})$ is defined by
\[ C(t, d_{K^{\prime}}) = 18 (t+2)! (t+1)^{(t+2)} (32 d_{K^{\prime}})^{(t+3)} \log \left( 2(t+1) d_{K^{\prime}} \right). \]
\end{theorem}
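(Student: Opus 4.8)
This is the Baker--W\"ustholz theorem, a foundational result in transcendence theory; rather than a full proof I will describe the shape of the argument one would follow. The plan is the classical ``transcendence machine,'' run by contradiction: suppose $0 < |\Lambda| < \exp\bigl(-C \log B \prod_{j=0}^t h'(\rho_j)\bigr)$ for a constant $C = C(t, d_{K'})$ to be pinned down at the end, and derive a contradiction. One works throughout in the torus $\mathbb{G}_m^{t+1}$, and the content of the hypothesis is that the one-parameter subgroup in the direction $(\log\rho_0, \log\rho_1, \dots, \log\rho_t)$ lies exponentially close to the hyperplane $L(z_0,\dots,z_t) = 0$. The first step is the auxiliary construction: using Siegel's lemma (Thue--Siegel pigeonhole over $K'$) I would build a nonzero polynomial $P$ with algebraic-integer coefficients of prescribed multidegree and controlled house so that the associated analytic function $\Phi$ --- a polynomial combination, in several complex variables, of powers $\rho_i^{z_i}$ with the near-relation among the $\log\rho_i$ built in --- vanishes to high order at every point of a block of lattice points. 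This is possible precisely because the number of coefficients of $P$ exceeds the number of linear conditions imposed on it.

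The heart of the proof is an extrapolation. Because $\Lambda$ is assumed extraordinarily small, $\Phi$ (which a priori vanishes only on a small block) is forced to be extraordinarily small; then, by the Schwarz lemma and the maximum-modulus principle applied one complex variable at a time, iterated over several ``rounds'' --- each round trading order of vanishing for a larger range --- one concludes that some low-order partial derivative of $\Phi$ takes, at a lattice point, a value $\gamma$ that is a \emph{nonzero} algebraic number of explicitly bounded degree and house, yet with $|\gamma|$ far smaller than any Liouville-type bound permits. What prevents this extrapolation from collapsing trivially, and what yields the clean, essentially optimal shape of the constant $C(t, d_{K'})$ --- the factorials, the powers of $t+1$, and the factor $(32 d_{K'})^{t+3}$ --- is a \emph{multiplicity (zero) estimate} on the group variety: a uniform, effective bound on the order to which a polynomial of given degree can vanish along the relevant subgroup without vanishing identically. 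This multiplicity estimate (the Masser--W\"ustholz and Philippon circle of ideas) is W\"ustholz's decisive contribution, and it is the step I expect to be the main obstacle.

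To finish, I would invoke the fundamental inequality of transcendence theory: a nonzero algebraic integer of degree $\leq D$ and house $\leq H$ has archimedean absolute value at least about $H^{-(D-1)}$, since the product of its conjugates has absolute value at least $1$. Comparing this lower bound for $|\gamma|$ with the upper bound produced by the extrapolation gives a contradiction, provided the free parameters --- the multidegrees, the initial vanishing order, the number of rounds, the block sizes --- are chosen so that all the estimates balance; optimizing these choices is exactly what produces the explicit value of $C(t, d_{K'})$ in the statement. Beyond the multiplicity estimate, the remaining labor is bookkeeping: propagating height bounds through the construction (which is where the modified heights $h'$ of \S\ref{sec:av_hf} enter, controlling $|\log\rho_j|$) and fixing branches of the logarithms consistently, so that ``$\Lambda \neq 0$'' genuinely obstructs the vanishing of $\Phi$.
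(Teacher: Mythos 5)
The paper does not prove Theorem~\ref{thm:BW}: it is quoted verbatim from Baker--W\"ustholz \cite{Baker-Wustholz:1993} as a black box, so there is no in-paper argument to compare your proposal against. What you have written is an accurate high-level roadmap of the transcendence-theoretic proof in the cited reference --- auxiliary function built by Siegel's lemma, extrapolation via Schwarz/maximum modulus, W\"ustholz's multiplicity estimate on commutative group varieties as the decisive new ingredient, and a Liouville-type lower bound to close the contradiction --- and you are candid that it is a sketch rather than a proof. That candor is appropriate: none of the quantitative choices (multidegrees, vanishing orders, number of extrapolation rounds, the precise bookkeeping that produces the constant $C(t,d_{K'})$ with its $18$, $(t+2)!$, $(t+1)^{t+2}$, and $(32 d_{K'})^{t+3}$ factors) are actually carried out, and the multiplicity estimate --- which you rightly flag as the main obstacle --- is the content of an entire separate body of work. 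So this should be read as a correct description of the strategy of the original proof, not as a substitute for it; in the context of the present paper, simply citing \cite{Baker-Wustholz:1993} (as the authors do) is the right move, and re-deriving the theorem would be far out of scope.

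One small point worth tightening if you ever expand this: the branch-of-logarithm issue you mention at the end is not incidental bookkeeping. In the application in \S\ref{sec:pl_inf}, the integer $A$ adjusting for the principal branch is absorbed into the coefficient $b'_{2,0}$ of $\log\zeta$, and its size (bounded by $tB$) feeds directly into the $B' = (t+1)wB$ that appears inside the logarithm in the final bound. So the consistency of branches is not merely needed to make ``$\Lambda\neq 0$'' meaningful --- it quantitatively affects the constants downstream, which is why the paper is careful to track it.
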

Note that we may be sure $\Lambda \neq 0$ if the set $\{ \log \rho_i \}$ is linearly independent over $\mathbb{Q}$.

\subsection{Obtaining the initial bound}
The theorems of Baker-W\"ustholz and Yu both provide inequalities of the form ``a polynomial function of $B$ is bounded by a polynomial function of $\log(B)$,'' which in turn guarantee an absolute bound on $B$. The analysis to determine such a bound explicitly is standard; we will use the following result of Peth\H{o} and de Weger for this purpose.
\begin{lemma}[Peth\H{o} and de Weger {\cite[Lemma 2.2]{Petho-deWeger:1986}}] \label{thm:PdW}
Suppose the real numbers $a, b, h$ satisfy $a\geq 0$, $h\geq 1$, $b>\bigl( \frac{e^2}{h} \bigr)^h$, and let $x\in\mathbb{R}$ be the largest solution to the equation
\[ x = a + b(\log x)^h. \]
Then
\[ x < 2^h \left( a^{\frac{1}{h}} + b^{\frac{1}{h}} \log\left( h^h b \right) \right)^h. \]
\end{lemma}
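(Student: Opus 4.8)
The plan is to prove the (ostensibly stronger) statement that \emph{every} real $X$ with $X = a + b(\log X)^h$ satisfies $X < N$, where $N := 2^h\bigl(a^{1/h} + b^{1/h}\log(h^h b)\bigr)^h$; equivalently, that $X > a + b(\log X)^h$ holds for every $X \ge N$. This suffices, since it shows no solution of the equation lies in $[N,\infty)$, and in particular the largest solution lies below $N$. After the substitution $y = X^{1/h}$ — so that $X \ge N$ becomes $y \ge 2Y_0$ with $Y_0 := a^{1/h} + b^{1/h}\log(h^h b)$, and $\log X = h\log y$ — the inequality to be established is $y^h - a > b h^h(\log y)^h$ for all $y \ge 2Y_0$.

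First I would invoke the elementary superadditivity of $t \mapsto t^h$ on $[0,\infty)$ for $h \ge 1$, i.e.\ $p^h - q^h \ge (p-q)^h$ for $p \ge q \ge 0$; since $y \ge 2Y_0 \ge 2a^{1/h} \ge a^{1/h}$, this gives $y^h - a \ge (y - a^{1/h})^h$. Hence it is enough to prove
\[ G(y) := y - a^{1/h} - b^{1/h} h \log y > 0 \qquad \text{for all } y \ge 2Y_0, \]
because raising the two positive sides of $y - a^{1/h} > b^{1/h} h \log y$ to the $h$th power then yields $y^h - a \ge (y - a^{1/h})^h > b h^h(\log y)^h$ (positivity of $b^{1/h} h \log y$ holds since $y > 1$, as $b > (e^2/h)^h$ forces $2Y_0 > b^{1/h} h > e^2 > 1$). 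Now $G'(y) = 1 - b^{1/h} h / y$, so $G$ has a unique critical point at $y = b^{1/h} h$ and is increasing on $(b^{1/h} h, \infty)$; moreover $b > (e^2/h)^h$ comfortably gives $\log(h^h b) > 2h$, whence $2Y_0 \ge 2 b^{1/h}\log(h^h b) > b^{1/h} h$. Therefore $G$ is increasing on $[2Y_0,\infty)$, and it remains only to check $G(2Y_0) > 0$.

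For this last inequality I would put $T := 2Y_0$ and eliminate $a^{1/h} = T/2 - b^{1/h}\log(h^h b)$: using $h^h b = (b^{1/h}h)^h$, all dependence on $a$ cancels and $G(2Y_0) > 0$ reduces to $\tfrac{S}{2} > \log S$ with $S := T/(b^{1/h} h)$; since $S - 2\log S$ has global minimum $2(1 - \log 2) > 0$ at $S = 2$, this holds for every $S > 0$, completing the argument. The step I expect to be the crux is this final one: the substitutions must be arranged so that the $a$-term cancels exactly rather than being estimated — a direct bound on $\log(2Y_0)$ is too wasteful near the threshold $b = (e^2/h)^h$ — and one must verify that the hypothesis on $b$ is (just) enough to push $2Y_0$ past the unique critical point of $G$, so that $G(2Y_0)$ really is the minimum of $G$ on the relevant ray.
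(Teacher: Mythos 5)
The paper does not prove this lemma; it is quoted verbatim from Peth\H{o}--de Weger \cite{Petho-deWeger:1986}, so there is no in-paper argument to compare against. Your proof is correct and self-contained. I verified the key steps: the substitution $y = X^{1/h}$ turns $\log X$ into $h\log y$; superadditivity of $t\mapsto t^h$ for $h\ge 1$ gives $y^h - a \ge (y - a^{1/h})^h$ (valid since $y \ge 2Y_0 > a^{1/h}$); the hypothesis $b>(e^2/h)^h$ yields both $b^{1/h}h > e^2$ and $\log(h^h b) > 2h$, which together push the starting point $2Y_0$ past the unique critical point $y = b^{1/h}h$ of $G(y) = y - a^{1/h} - b^{1/h}h\log y$, so $G$ is increasing on $[2Y_0,\infty)$; and the identity $h^h b = (b^{1/h}h)^h$ makes the $a$-dependence cancel exactly at $y = 2Y_0$, reducing $G(2Y_0)>0$ to $S/2 > \log S$ with $S = 2Y_0/(b^{1/h}h)$, which holds for all $S>0$ with minimum value $1-\log 2 > 0$ at $S=2$. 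This yields the strict inequality and hence the stated bound on the largest solution.
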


\section{Initial Exponent Bounds}\label{sec:initial_bound}

\subsection{An upper bound at the extremal place}
Suppose $(\tau_1, \tau_2)$ is a solution to the $S$-unit equation, with $\tau_i$ specified as in \eqref{eqn:tau_i}. We set $B = \max_{i,j} \left| b_{i,j} \right|$, and assume $B \geq \max \{4, w \}$. Relabeling $\tau_1$ and $\tau_2$ if necessary, we assume $B = |b_{1,j}|$ for some $1 \leq j \leq t$. Recall that $S$ contains precisely $t + 1$ places, $\mathfrak{p}_1, \dots, \mathfrak{p}_{t+1}$. We choose the indices $k, \ell \in \{1, 2, \dots, t+1\}$ so that
\[ \bigl| \log |\tau_1|_{\mathfrak{p}_k} \bigr| = \max_{\mathfrak{p} \in S} \, \bigl| \log| \tau_1|_{\mathfrak{p}} \bigr|, \qquad |\tau_1|_{\mathfrak{p}_\ell} = \min_{\mathfrak{p}\in S} \, |\tau_1|_{\mathfrak{p}}.\]

\begin{remark}
In the sequel, we number our constants in an effort to stay consistent with the enumeration given in Smart's paper \cite{Smart:1995}. There, Smart considers a more general unit equation, and so introduces certain constants $c_4(i)$, $c_6(i)$, $c_7(i), \dots$ whose values are trivial in the present application. So while the alert reader may notice gaps in the enumeration of constants, this is intentional. (Adjusting our implementation to the more general setting is not difficult, but we are satisfied to limit the discussion to match the current state of the implementation.)
\end{remark}

For any choice of $U := \{\mathfrak{u}_1,\dots, \mathfrak{u}_t\} \subseteq S$ define the $t\times t$ matrix
\[ M = (m_{i,j}), \qquad m_{i,j}=\log| \rho_j|_{\mathfrak{u}_i}. \]
One may always choose $U$ so that $M$ is invertible (see \cite[\S5.1]{Evertse-Gyory:2015}), and so we assume this is the case. We have
\[ \left( \begin{array}{c} b_{1,1} \\ b_{1,2} \\ \vdots \\ b_{1,t} \end{array} \right) = M^{-1} \left( \begin{array}{c} \log |\tau_1|_{\mathfrak{u}_1} \\ \log|\tau_1|_{\mathfrak{u}_2} \\ \vdots \\ \log|\tau_1|_{\mathfrak{u}_t} \end{array} \right).\]
Let $\| M \|$ be the row norm of $M^{-1}$, i.e. $\| M \| = \max_i \sum_{j=1}^t |m_{i,j}|$, and set
\[ c_1 := \max \left\{ 1,\ \max_{U \subseteq S} \,\{ ||M|| : M \text{ is invertible } \} \right\}. \]
Note that this differs slightly from Smart's definition, to ensure that $c_1\geq 1$. Then $B \leq c_1 \bigl| \log|\tau_1|_{\mathfrak{p}_k} \bigr|$. We define
\[ c_2 := \frac{1}{c_1} \qquad \qquad c_3 := \frac{0.9999999c_2}{r+s}. \]
By \cite[Lemma 2]{Smart:1995}, we have
\begin{equation}\label{eqn:tau1_upperbound}
|\tau_1|_{\mathfrak{p}_\ell}\leq e^{-c_3 B}.
\end{equation}

We now have an upper bound on $|\tau_1|_{\mathfrak{p}_\ell}$ in terms of $B$. We next establish a lower bound, also involving $B$, which will force a limit on the size of $B$. The precise argument depends on whether $\mathfrak{p}_\ell$ is a finite or infinite place. For the purposes of the algorithm, we must compute this bound on $B$ for each possible index $1 \leq \ell \leq t$; we have no choice but to take the largest possible bound, i.e., the larger of the two values $K_0$ and $K_1$ determined in the remainder of this section.

\subsection{Case I: $\mathfrak{p}_\ell$ is finite}\label{sec:pl_fin}
If $\mathfrak{p}_\ell$ is finite, then let $\mathfrak{p}_\ell$ also denote the associated prime ideal in $\OK$. Let $p$ be the prime of $\Z$ lying below $\mathfrak{p}_\ell$, and let $e_\ell$ and $f_\ell$ denote the ramification index and inertial degree of $\mathfrak{p}_\ell$ over $p$, respectively. From \eqref{eqn:tau1_upperbound} we have
\begin{equation}\label{eqn:Np_bound}
N_{K/\Q}( \mathfrak{p}_{\ell} )^{-\ord_{\mathfrak{p}_{\ell}} (\tau_1)} \leq e^{-c_3 B}.
\end{equation}
Setting
\[ c_5(\ell) := \frac{c_3}{e_\ell \log N_{K/\Q}(\mathfrak{p}_\ell)}, \]
the inequality \eqref{eqn:Np_bound} yields
\begin{equation}\label{eq:fin_lb}
\ord_{\mathfrak{p}_{\ell}} \tau_1 \geq \frac{c_3 B}{\log N_{K/\Q}(\mathfrak{p}_{\ell})} = e_\ell c_5(\ell) B > 0,
\end{equation}
and so $\ord_{\mathfrak{p}_\ell} \tau_2 = 0$. We would like to apply Yu's Theorem to ${\Theta=\tau_2}$, but unfortunately the generators $\rho_i$ may have nonzero order with respect to $\mathfrak{p}_\ell$. So we now replace the $\rho_i$ with a different set of generators, as in \cite[pgs.~824--825]{Smart:1995}.
First, set $n_i :=  \ord_{\mathfrak{p}_\ell} \rho_i$. Necessarily, there exist indices $i$ for which $n_i \neq 0$. Choose $i_0$ so that
\[ |n_{i_0}| = \min \{ |n_i| : n_i \neq 0 \}, \]
and now relabel so that $i_0 = t$. For $1 \leq i \leq t-1$, define
\[ \mu_i = \rho_i^{n_t} \rho_t^{-n_i}, \]
so that $\ord_{\mathfrak{p}_\ell} \mu_i = 0$. Next, for each $i$ with $1 \leq i \leq t-1$, choose integers $d_i, r_i$ such that
\[ 0 \leq r_i < |n_t| \qquad \text{and} \qquad b_{2,i} = n_t d_i + r_i \]
Necessarily, $|d_i| \leq B$. Set $N := \sum_{i=1}^{t-1} n_i r_i$.
\begin{lemma}
  We have $N \equiv 0 \pmod{n_t}$.
\end{lemma}
\begin{proof}
Since $\ord_{\mathfrak{p}_\ell} (\tau_2 \rho_0^{-b_{2,0}}) = 0$, we know $\sum_{i=1}^t n_i b_{2,i} = 0$. Thus,
\[ n_t b_{2,t} = - \sum_{i=1}^{t-1} n_i b_{2,i} = - n_t \sum_{i=1}^{t-1} n_i d_i  - \sum_{i=1}^{t-1} n_i r_i, \]
proving the claim.
\end{proof}
Setting $N_0 = \frac{N}{n_t}$ and $\mu_0 := \rho_0^{b_{2,0}} \rho_t^{-N_0} \cdot \prod_{i=1}^{t-1} \rho_i^{r_i}$, we have arranged that
\begin{equation}\label{eq:tau2_in_mus}
  \tau_2 = \mu_0 \prod_{i=1}^{t-1} \mu_i^{d_i}, \quad |d_i| \leq B, \quad \ord_{\mathfrak{p}_\ell} \mu_i = 0.
\end{equation}
Since $0 \leq b_{2,0} < w$ and $0 \leq r_i < |n_t|$, there are only finitely many possible values for $\mu_0$, and this finite set can be determined without any knowledge of $B$ or the $b_{2,i}$. For each $\mu_0$, we may apply Corollary \ref{corollary:Yu_bound} or \ref{corollary:K_not_aux} as appropriate, and obtain a constant $c'_8(\ell, \mu_0)$ such that
\[ \ord_{\mathfrak{p}_\ell} \tau_1 = \ord_{\mathfrak{p}_\ell} (\tau_2 - 1) < c'_8(\ell, \mu_0) \log B. \]
Setting
\[ c_8(\ell) := \max \left\{ \frac{e^2}{\log 2}, \max_{\mu_0} \{ c'_8(\ell, \mu_0) \} \right\}, \]
we may be sure every $S$-unit solution satisfies
\begin{equation}\label{eq:fin_ub}
\ord_{\mathfrak{p}_\ell} \tau_1 < c_8(\ell) \log B.
\end{equation}
Combining inequalities \eqref{eq:fin_lb} and \eqref{eq:fin_ub}, we have
\[ B < \frac{c_8(\ell)}{e_\ell c_5(\ell)} \log B. \]
Since $c_1 \geq 1$ and $c_8(\ell) \geq e^2 (\log 2)^{-1}$, it follows that
\[ \frac{c_8(\ell)}{e_\ell c_5} \geq e^2. \]
Applying Lemma \ref{thm:PdW} with $a = 0$, $b = c_8(\ell)/e_\ell c_5(\ell)$, and $h = 1$, we may conclude
\[ B \leq K_0(\ell) := \frac{ 2 c_8(\ell) }{e_\ell c_5(\ell)} \log \left( \frac{ c_8(\ell) }{e_\ell c_5} \right). \]
Set
\[ K_0 := \max \{ K_0(\ell) : \mathfrak{p}_\ell \in S_\mathrm{fin} \}. \]
If $\ell$ corresponds to a finite place, then $B \leq K_0$.

In our implementation, the functions {\texttt{mus}} and {{\texttt{possible\_mu0s}} are used to recover the $\mu_i$ for each finite place $\mathfrak{p}_\ell$. The constants $c_8(\ell)$ determined from Yu's Theorem are computed in \texttt{Yu\_bound}, while the constant $K_0$, which may be of independent interest, is computed by {\texttt{K0\_func}}.

\subsection{Case II: $\mathfrak{p}_\ell$ is infinite}\label{sec:pl_inf}
We now assume $\mathfrak{p}_\ell$ is infinite. As in \S2.3, we let $\sigma_{\mathfrak{p}_\ell}$ denote the embedding of $K$ into $\C$ such that \[ \left| \alpha \right|_{\mathfrak{p}_\ell} = \left| \sigma_{\mathfrak{p}_\ell}(\alpha) \right|^{\delta(\ell)}, \quad \text{where } \delta(\ell) = \begin{cases} 1 & \text{$\mathfrak{p}_\ell$ is real,} \\ 2 & \text{$\mathfrak{p}_\ell$ is complex.} \end{cases} \]
We let $\alpha^{(\ell)}$ denote $\sigma_{\mathfrak{p}_\ell}(\alpha)$ for any $\alpha \in K$, and we define
\[ c_{11}(\ell) := \frac{ \delta(\ell) \log 4}{c_3}, \qquad c_{13}(\ell) := \frac{ c_3 }{\delta(\ell)}. \]
The condition \eqref{eqn:tau1_upperbound} can now be expressed as
\[ \left| \tau_1^{(\ell)} \right| \leq e^{-c_{13}(\ell) B}. \]
The choices of $c_{11}(\ell)$ and $c_{13}(\ell)$ guarantee that
\[ B \geq c_{11}(\ell) \quad \Longrightarrow \quad \left| \tau_1^{(\ell)} \right| \leq \frac{1}{4}. \]
Set $\Lambda := \log \tau_2^{(\ell)}$. The estimate $|\log z| \leq 2|z-1|$ holds for $|z-1| \leq \frac{1}{4}$, and so
\begin{equation}\label{eqn:log_tau2_bound}
\left| \Lambda \right| \leq 2 \left| \tau_2^{(\ell)} - 1 \right| = 2 \left| \tau_1^{(\ell)} \right| \leq 2e^{-c_{13}(\ell)B}.
\end{equation}
The next step is to view $\Lambda$ as a linear form in logarithms and apply the theorem of Baker and W\"{u}stholz. Set $\zeta := \exp \frac{2 \pi \sqrt{-1}}{w} \in \C$. Since $\rho_0$ is a $w$th root of unity, there exists $0 \leq k < w$ such that $(\rho_0^{(\ell)})^{b_{2,0}} = \zeta^k$. By \eqref{eqn:tau_i}, we have
\begin{equation}\label{eq:Lambda_linear_logs}
\begin{split}
\Lambda & = \log \left( \left( \rho_0^{(\ell)} \right)^{b_{2,0}} \cdot \prod_{j=1}^t \left(\rho_j^{(\ell)}\right)^{b_{2,j}} \right) \\
& = \log \zeta^k + \sum_{j=1}^t b_{2,j} \log \rho_j^{(\ell)} + A \cdot 2\pi\sqrt{-1} \\
& = k \log \zeta + \sum_{j=1}^t b_{2,j} \log \rho_j^{(\ell)} + Aw \log \zeta \\
& = (Aw + k) \log \zeta + \sum_{j=1}^t b_{2,j} \log \rho_j^{(\ell)},
\end{split}
\end{equation}
where we have introduced $A \in \Z$ to adjust for the principal branch of the logarithm. Certainly $|A| \leq tB$, and so $|Aw + k| \leq (t + 1)Bw$. Set
\[ b'_{2,j} := \begin{cases} Aw + k & j = 0 \\ b_{2,j} & j > 0 \end{cases} \]
and $L'(z_0,\dots,z_t) := \sum_{j=0}^t b'_{2,j} z_j$. We now have
\[ |\Lambda| = \left| L'(\log \zeta, \log \rho_1^{(\ell)}, \dots, \log \rho_t^{(\ell)}) \right|. \]
Taking $K' = \Q(\rho_0, \dots, \rho_t) \cong \Q(\zeta, \rho_1^{(\ell)}, \dots, \rho_t^{(\ell)})$, we define
\[ c_{14}(\ell) := C(t, d_{K'}) \prod_{j=0}^t h'(\rho_j). \]
(Recall that $C(t, d_{K'})$ is defined in Theorem \ref{thm:BW}.)  We have $|b'_{2,j}| \leq B' := (t + 1)Bw$. Applying Theorem \ref{thm:BW} to $\Lambda$, we obtain
\[ \log |\Lambda| > -c_{14}(\ell) \cdot \log B' = -c_{14}(\ell) \log \bigl( (t+1)wB \bigr). \]
Combining this inequality with \eqref{eqn:log_tau2_bound}, we obtain
\begin{equation}\label{eqn:lambda}
 2e^{-c_{13}(\ell)B} \geq |\Lambda| \geq e^{-c_{14}(\ell) \log B'}.
\end{equation}
This yields the inequality
\[ B < a(\ell) + b(\ell) \log B, \]
where
\[ a(\ell) := \frac{1}{c_{13}(\ell)} \left( \log 2 + c_{14}(\ell) \log \bigl( (t+1)w \bigr) \right), \quad b(\ell) := \frac{ c_{14}(\ell) }{ c_{13}(\ell) }. \]
As $c_{13}(\ell) \leq \frac{1}{t}$ and $c_{14}(\ell) \geq 32^3$, we have $a(\ell) \geq 0$ and ${b(\ell) \geq e^2}$. So by Lemma \ref{thm:PdW}, $B < c_{15}(\ell)$ (provided $B \geq c_{11}(\ell)$), where
\[ c_{15}(\ell) := 2\bigl( a(\ell) + b(\ell) \log b(\ell) \bigr). \]
Thus, setting
\[ \begin{split}
K_1(\ell) & := \max \{ c_{11}(\ell), c_{15}(\ell) \}, \\
K_1 & := \max \{K_1(\ell) : \mathfrak{p}_\ell \text{ is infinite} \},
\end{split} \]
we may be sure $B \leq K_1$. In our implementation, the constant $K_1$ is computed in the function {\texttt{K1\_func}}.

Combining all the results of this section, we obtain the following.
\begin{lemma}\label{lemma:bound_for_B}
  The constant $B$ satisfies $B \leq \max \{4, w, K_0, K_1\}$.
\end{lemma}}

\section{LLL Reduction}\label{sec:LLL_red}

In this section we explain how we can reduce the upper bound we have computed in Section \ref{sec:initial_bound}. This is necessary, because in practice the size of the initial bound is extremely large and cannot be used for practical computations. The idea of the method we will present here has its origin in de Weger's thesis \cite{deWeger:thesis, deWeger:1987, deWeger:1989} where he develops a method based on multi-dimensional approximation lattices of linear form of $p$-adic numbers to solve (among many other equations) $S$-unit equations\footnote{It is worth mentioning the recent results of von K\"{a}nel and Matschke \cite{vonKanel-Matschke:2016}, who solve $S$-unit equations using modularity.} over $\Q$. These ideas of de Weger have been extended by himself and others to apply over any number field $K$, and have also been used for the solution of other exponential Diophantine equations \cite{Tzanakis-deWeger:1989, Tzanakis-deWeger:1991, Tzanakis-deWeger:1992, Smart:1995}.

In the reduction step we use the LLL reduction algorithm on lattices generated by integer matrices. So instead of the classical LLL algorithm \cite{LLL:1982}, we use the algorithm in \cite{deWeger:1987}. If $\Lat$ is a lattice in $\R^n$, let $\Lat^* = \Lat - \{\mathbf{0} \}$. For  $\by\in\R^n$, we define
\[ \ell( \Lat, \by) = \begin{cases}
    \displaystyle \min_{\bx \in \Lat^*} \Vert \bx \Vert, & \text{if}~\by \in \Lat,  \\
    \displaystyle \min_{\bx \in \Lat} \Vert \bx - \by \Vert,     & \text{otherwise}.
  \end{cases}
\]
Computing the exact value of $\ell(\Lat,\by)$ is a very challenging problem in general. Instead, the function \texttt{minimal\_vector} computes a lower bound using standard properties of a reduced basis of a lattice and the LLL algorithm (see \cite[Chapter V]{Smart:1998}). As in the previous section, we follow Smart's notation in \cite{Smart:1995}. Most of the material we present in this section can also be found in \cite{Smart:1998, Evertse-Gyory:2015}.

We preserve the meaning of $\mathfrak{p}_\ell$ from \S4. When $\mathfrak{p}_\ell$ is a finite place, we let $p$ denote the prime of $\Z$ lying below $\mathfrak{p}_\ell$. We continue to assume $B \geq \max \{4, w\}$ in this section.

\subsection{Finite places}
Suppose $\mathfrak{p}_\ell$ is a finite place. Set
\[ c_{16}(\ell) := 1 + \frac{1}{c_5(\ell)}, \]
and suppose that $B \geq c_{16}(\ell)$. Define $\Delta_2 \in K_{\mathfrak{p}_\ell}$ as $\Delta_2 := \log_p \tau_2$.
Combining \eqref{eq:fin_lb}, \eqref{eq:ord_p_ord_frak_p}, and ${B \geq c_{16}(\ell)}$, shows that $\ord_p \tau_1 > 1$. Consequently, $|\tau_1|_p < p^{-\frac{1}{p-1}}$, and by \eqref{eq:ord_logp},
\[ \ord_p \Delta_2 = \ord_p \log_p \tau_2 = \ord_p \log_p (1 - \tau_1) = \ord_p \tau_1 > 1. \]
Let $\mu_i, d_i$ be as given in \eqref{eq:tau2_in_mus}, so that we have
\[ \Delta_2 = \log_p \tau_2 = \log_p \mu_0 + \sum_{i=1}^{t-1} d_i \log_p \mu_i. \]

Choose $\theta \in K_{\mathfrak{p}_\ell}$ such that $K_{\mathfrak{p}_\ell} = \Q_p(\theta)$, and let $\Disc(\theta)$ denote the discriminant of $\theta$. Set $D_p(\theta) = \ord_p \Disc (\theta)$ and $n = [K_{\mathfrak{p}_\ell} : \Q_p]$, so that $n = e_\ell f_\ell$. Expressing $\Delta_2$ with respect to the power basis, we obtain $\Delta_{2,k} \in \Q_p$ such that $ \Delta_2 = \sum_{k=0}^{n-1} \Delta_{2,k} \theta^k$. Further, we may express
\begin{equation}\label{eqn:Delta_2i}
\Delta_{2,k} = a_{0,k} + \sum_{j=1}^{t-1} d_j a_{j,k}, \qquad a_{j,k} \in \Q_p,~0 \leq k \leq n-1.
\end{equation}
Using an idea due to Evertse \cite[p.257]{Tzanakis-deWeger:1992}, we have
\[ \ord_p \Delta_{2,k} \geq c_5(\ell) B - \frac{D_p(\theta)}{2}. \]
Define
\[ \begin{split}
c_{17}(\ell) & := \min \,\{ \ord_p a_{j,k} : 1 \leq j \leq t-1,~0 \leq k \leq n-1 \}, \\
c_{18}(\ell) & := c_{17}(\ell) + \frac{D_p(\theta)}{2}, \end{split} \]
and choose $\lambda \in \Q_p$ such that $\ord_p \lambda = c_{17}(\ell)$.

Should there be some index $k$ such that $c_{17}(\ell) > \ord_p(a_{0,k})$, then $\ord_p \Delta_{2,k} = \ord_p a_{0,k} < c_{17}(\ell)$, and consequently
\[ B < \frac{c_{18}(\ell)}{c_5(\ell)}. \]
For the remainder, then, we assume
\[ c_{17}(\ell) \leq \min \,\{ \ord_p a_{0,k} : 0 \leq k \leq n-1 \}. \]
By the choice of $\lambda$, $\kappa_{j,k} := a_{j,k}/\lambda$ is a $p$-adic integer for all $j,k$, and we may rewrite \eqref{eqn:Delta_2i} as
\[ \frac{ \Delta_{2,k} }{ \lambda } = \kappa_{0,k} + \sum_{j=1}^{t-1} d_j \kappa_{j,k}, \quad \text{with} \quad \ord_p \left( \frac{ \Delta_{2,k} }{\lambda} \right) \geq c_5(\ell)B - c_{18}(\ell). \]

For any $a \in \Z_p$ and a positive integer $z$, let $a^{(z)}$ denote the unique integer between $0$ and $p^z$ such that $a\equiv a^{(z)}\pmod{p^z}$. For a positive integer $u$, let $\Lat$ be the lattice generated by the columns of the matrix
\[
  \begin{pmatrix}
    1                    &        &  0                     & 0    &  \cdots      & 0    \\
                         & \ddots &                        & \vdots    &        &   \vdots  \\
    0                    &        & 1                      & 0    &   \cdots     & 0    \\
    \kappa^{(u)}_{1, 0}   & \cdots & \kappa^{(u)}_{t-1, 0}   & p^u &        & 0   \\
    \vdots               &        & \vdots                 &     & \ddots &     \\
    \kappa^{(u)}_{1, n-1} & \cdots & \kappa^{(u)}_{t-1, n-1} & 0   &        & p^u
  \end{pmatrix}\in\Z^{(t+n-1)\times (t+n-1)}.
\]
Define
\[
  \by = \begin{pmatrix}
    0 & \cdots & 0 & -\kappa^{(u)}_{0, 0} & \cdots & -\kappa^{(u)}_{0, n-1} \\
  \end{pmatrix}^\intercal \in \Z^{t+n-1}.
\]
Also set
\[ K_0^{\mathrm{LLL}}(\ell) := \max \left\{ 4, w, \frac{u + c_{18}(\ell)}{c_5(\ell)}, c_{16}(\ell) \right\}. \]
The following lemma is a restatement of \cite[Lemma 5]{Smart:1995}, and provides an opportunity to improve the bound on $B$.\footnote{Note $s_d$ in \cite{Smart:1995} has the value $t-1$ in our notation.}

\begin{lemma}\label{lem:p_adic_reduction}
If $\ell(\Lat,\by) > \sqrt{t-1} \cdot K_0$, then $\displaystyle B < K_0^\mathrm{LLL}(\ell)$.
\end{lemma}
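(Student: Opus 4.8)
The plan is to argue by contradiction: assume that $B \geq (u + c_{18}(\ell))/c_5(\ell)$ (and recall we are already in the range $B > c_{16}(\ell)$, so the $p$-adic logarithm manipulations above are valid, and $B > K_1$ would contradict the initial bound anyway, but more to the point we want to deduce a bound smaller than $K_1$ is impossible). From the displayed congruence we have $\ord_p(\Delta_{2,i}/\lambda) \geq c_5(\ell) B - c_{18}(\ell)$, so the assumption forces $\ord_p(\Delta_{2,i}/\lambda) \geq u$ for every $i$, i.e.
\[ \kappa_{0,i} + \sum_{j=1}^{t-1} d_j \kappa_{j,i} \equiv 0 \pmod{p^u}, \qquad 0 \leq i \leq n-1. \]
First I would translate these congruences into a lattice statement: the vector $\mathbf{v} := (d_1, \dots, d_{t-1}, \; *, \dots, *)^T \in \Z^{t+n-1}$, where the last $n$ entries are the integers $c_i$ defined by $\kappa_{0,i}^{(u)} + \sum_j d_j \kappa_{j,i}^{(u)} + c_i p^u = $ (the appropriate integer), lies in the lattice $\Lat$ generated by the columns of the displayed matrix. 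Indeed, the first $t-1$ columns contribute $d_j$ in slot $j$ and $d_j \kappa^{(u)}_{j,i}$ in slot $t-1+i$, and the last $n$ columns (the $p^u \mathbf{e}$ columns) let us adjust each of the bottom $n$ coordinates by arbitrary multiples of $p^u$; the congruences guarantee that $-\by - (\text{the first-}(t-1)\text{-columns combination})$ is divisible by $p^u$ coordinatewise, so subtracting $\by$ lands us in $\Lat$.

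Next I would compare distances. By construction $\mathbf{v} + \by \in \Lat$ — or, depending on sign conventions, $\mathbf{v}$ itself is a lattice point at distance $\|\by - (\text{something in }\Lat)\|$ from $\by$; either way, the point $\by$ is within distance $\|\mathbf{v}\|_{\text{top part}}$ of the lattice, where only the first $t-1$ coordinates $d_1, \dots, d_{t-1}$ of $\mathbf{v}$ are not forced to vanish after the appropriate reduction. More precisely: the lattice vector we produce differs from $\by$ only in its first $t-1$ coordinates, and there it equals $(d_1, \dots, d_{t-1})$. Since $|d_i| \leq B \leq K_1$ for each $i$ (using $B \leq K_1$ from Section 4 — if $B > K_1$ we have already contradicted the Baker/Yu bound), we get
\[ \ell(\Lat, \by) \;\leq\; \left\| (d_1, \dots, d_{t-1}, 0, \dots, 0) \right\| \;=\; \sqrt{\sum_{i=1}^{t-1} d_i^2} \;\leq\; \sqrt{t-1}\cdot K_1. \]
This contradicts the hypothesis $\ell(\Lat,\by) > \sqrt{t-1}\cdot K_1$. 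Hence the assumption $B \geq (u+c_{18}(\ell))/c_5(\ell)$ is untenable, and $B < (u + c_{18}(\ell))/c_5(\ell)$ as claimed.

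The main obstacle I anticipate is bookkeeping in the first step: getting the sign conventions and the reduction-modulo-$p^u$ exactly right so that the claimed vector genuinely lies in $\Lat$ — one must check that replacing each $\kappa_{j,i}$ by its truncation $\kappa^{(u)}_{j,i}$ changes $\kappa_{0,i} + \sum_j d_j\kappa_{j,i}$ by a multiple of $p^u$ (true since $|d_j| < p^u$ is \emph{not} automatic, so one actually needs $\ord_p$ of the difference $\kappa_{j,i} - \kappa^{(u)}_{j,i} \geq u$ and absorbs $d_j(\kappa_{j,i}-\kappa^{(u)}_{j,i})$ into the $p^u\Z$ slack provided by the last $n$ columns), and that the resulting integer vector has the stated first $t-1$ coordinates. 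The one genuinely substantive point is that the hypothesis $B \geq (u+c_{18}(\ell))/c_5(\ell)$ is exactly what is needed to push $\ord_p(\Delta_{2,i}/\lambda)$ past $u$; everything else is linear algebra over $\Z$ together with the earlier inequality $\ord_p \Delta_2 \geq c_5(\ell) B$ coming from $\tau_1 + \tau_2 = 1$ and the $p$-adic logarithm.
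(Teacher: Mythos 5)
Your proof is correct and matches the standard argument (due to de Weger, Tzanakis--de Weger, and Smart); the paper itself simply cites Smart's Lemma~5 without reproducing the proof, and your reconstruction is exactly the one found in those sources. The only thing worth flagging is that the needed a priori inequality $B \leq K_1$ implicitly requires interpreting $K_1$ in the lemma as the current working upper bound on $B$ (iteratively refined by LLL), rather than strictly as the infinite-place constant $K_1$ from \S4.3 --- you noticed this and handled it appropriately.
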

In the function \texttt{p\_adic\_LLL\_bound} we have implemented the above analysis. In more detail, the functions \texttt{log\_p} and \texttt{embedding\_to\_Kp} are used to compute the constants $a_{j,k} \in \Q_p$ up to a given precision. If this precision is $M$, i.e., if the $a_{j,k}$ are stored as pairs of integers modulo $p^M$, we clearly require $M > u$ for the algorithm to be meaningful. However, the shift by $\lambda$ requires an additional $c_{17}(\ell)$ $p$-adic digits of precision. So the algorithm checks that $M > u + c_{17}(k)$; if this fails, then the $p$-adic logarithms are computed to higher precision and the process is repeated.

The function \texttt{log\_p} is based on an algorithm of Smart in \cite[p.~30]{Smart:1998}. However, our implementation also resolves a crucial computational problem in the evaluation of $\log_p$ that to our knowledge has not been mentioned in the literature. To understand the issue, we must describe carefully what ``computing the logarithm'' means in a $p$-adic setting. Let us view $K$ as a subfield of $K_\mathfrak{p}$, and specify $\omega_1, \dots, \omega_n \in K$, a $\Q_p$-basis for $K_\mathfrak{p}$. Suppose $\alpha \in K$ and set $\beta := \log_p \alpha \in K_\mathfrak{p}$. Necessarily, there exist $b_j \in \Q_p$ such that $\beta = \sum_j b_j \omega_j$.

As a practical matter, no algorithm can return the true value $\beta$; it can only  return $\tilde{\beta} \in K$, an approximation to $\beta$ with $|\beta - \tilde{\beta}|_\mathfrak{p}$ very small. In practice however, we require something more specific. We want to find $\tilde{b}_j \in \Q$ such that $|b_j - \tilde{b}_j|_p$ is small for each $j$. When $p$ splits in $K$, the original algorithm is not guaranteed to do this.

It can happen that at some other prime $\mathfrak{p}'$ above $p$, $\alpha$ has a negative valuation. Consequently, the sum \eqref{eqn:log_series} used to compute $\tilde{\beta}$ will \emph{not} converge $\mathfrak{p}'$-adically, and the approximations $\tilde{b}_j$ are not guaranteed to be $p$-adically close to the $b_j$. We resolve this problem by choosing a suitable element $\eta \in K$ with $\ord_{\mathfrak{p}'} \eta \geq 0$ for all $\mathfrak{p'} \mid p$, such that $\eta$ also satisfies
\[ \ord_{\mathfrak{p}'} (\eta \alpha) \geq 0, \qquad \ord_{\mathfrak{p}} (\eta \alpha) = \ord_{\fp} \eta  = 0. \]
Then it holds that
\[ \log_p \alpha = \log_p( \eta \alpha) - \log_p \eta. \]
By evaluating the difference on the right hand side, these $\mathfrak{p}'$-adic divergence issues are avoided, and we may be sure that the individual coefficients $\tilde{b}_j$ approximate the $b_j$ $p$-adically.

In \texttt{p\_adic\_LLL\_bound\_one\_prime}, we attempt to find a value $u$ such that Lemma \ref{lem:p_adic_reduction} applies. If successful, we record the improved bound
$K_0^\mathrm{LLL}(\ell)$. The improvement offered by Lemma \ref{lem:p_adic_reduction} depends only on the assumption that $\mathfrak{p}_\ell$ is the extremal place, and that some bound $K_0 \geq c_{16}(\ell)$ on the exponents is known. So we may replace $K_0$ by $K_0^\mathrm{LLL}(\ell)$ and attempt to apply Lemma \ref{lem:p_adic_reduction} again, possibly improving the bound further. Because the application of LLL is very fast compared to the sieving step described in \S\ref{sec:sieve}, the algorithm repeats this process until no further improvements can be made to $K_0^\mathrm{LLL}(\ell)$. Once each $K_0^\mathrm{LLL}(\ell)$ has been optimized in this way, the function \texttt{p\_adic\_LLL\_bound} returns
\[ K_0^\mathrm{LLL} := \max \{K_0^\mathrm{LLL}(\ell) : \mathfrak{p}_\ell \in S_\mathrm{fin} \}. \]

\subsection{Complex places}\label{sec:LLL_complex_case}
We now consider the case where $\mathfrak{p}_\ell$ is an infinite complex place. The reduction is quite analogous to the $p$-adic case; again the standard references are \cite{Smart:1995, Smart:1998, Evertse-Gyory:2015}. We keep the notations from \S\ref{sec:pl_inf}. For $0 \leq j \leq t$ we define the complex numbers
\[ \kappa_j := \begin{cases} \log \zeta & j = 0, \\ \log \rho_j^{(\ell)} & j > 0. \end{cases} \]
As $\mathfrak{p}_\ell$ is an infinite place, we have established already that the $b'_{2,j}$ in
\[ \Lambda = \log \tau_2^{(\ell)} = \sum_{j=0}^t b'_{2,j} \kappa_j \]
satisfy the bounds
\[ \qquad |b'_{2,0}| \leq (t+1)Bw, \qquad |b'_{2,j}| \leq B\ \text{for}\ 1 \leq j \leq t. \]
We now attempt to use lattice reduction to improve the bound; the choice of lattice and certain constants will depend slightly on whether the $\kappa_j$ are all purely imaginary. So we define
\begin{equation*}
  \sigma := \begin{cases} 1 & \text{every $\kappa_j$ is pure imaginary,} \\
  0 & \text{otherwise}, \end{cases}
\end{equation*}
and define
\[ S := \bigl( t - 1 + \sigma \bigr) K_1^2, \qquad T := \left( \frac{1}{\sqrt{2}} \right)^{1 + \sigma} (t + w + tw)K_1. \]
If $\kappa_1,\dots,\kappa_t$ are not all pure imaginary, relabel $\kappa_1,\dots,\kappa_t$ so that $\Re \kappa_t \neq 0$. Now define
\[ a_{tt} := \begin{cases} [C \Re \kappa_t] & \Re \kappa_t \neq 0, \\ 1 & \Re \kappa_t = 0. \end{cases} \]
Let $A$ be the $(t + 1) \times (t + 1)$ integer matrix
\begin{equation}\label{eq:lattice_matrix}
A =   \begin{pmatrix}
    1                     &        & 0      &   0                    &         0     \\
                          & \ddots &        &   \vdots               &       \vdots  \\
    0                     &        & 1      &   0                    &         0     \\
    [C \cdot \Re \kappa_1] & \cdots & [C \cdot \Re \kappa_{t-1}] & a_{tt} & 0             \\
    [C \cdot \Im \kappa_1] & \cdots & [C \cdot \Im \kappa_{t-1}] & [C \cdot \Im \kappa_t] & [C \cdot \frac{2\pi}{w}]
  \end{pmatrix}.
\end{equation}
(By design, the upper left $t \times t$ block of $A$ is the identity matrix in case the $\kappa_j$ are all pure imaginary.) Now, let $\Lat$ be the lattice generated by the columns of $A$, and suppose $m_\Lat$ is a positive lower bound for $\ell(\Lat, \mathbf{0})$.
When $\mathfrak{p}_\ell$ is a infinite non-real place, we define:
\[ K_1^\mathrm{LLL}(\ell) := \max \left\{4, w, \frac{1}{c_{13}(\ell)} \log \left( \frac{2C}{ (m_\Lat^2 - S)^\frac{1}{2} - T} \right) \right\}. \]
Similar to \cite[Lemma VI.2]{Smart:1998}, we have
\begin{lemma}\label{lem:complex_reduction}
Suppose $\mathfrak{p}_{\ell}$ is a non-real infinite place. With notation as above, suppose $C$ is chosen such that $m_\Lat^2 > T^2 + S$. Then $B \leq K_1^{\mathrm{LLL}}(\ell)$.
\end{lemma}
\begin{proof}
There are two cases to consider, as $\sigma = 0$ or $\sigma = 1$. In each case, our goal is to establish the inequality
\begin{equation}\label{eq:root_mST}
  \sqrt{m_\Lat^2 - S} - T \leq 2C e^{-c_{13}(\ell)B},
\end{equation}
for the result follows by isolating $B$ in the inequality \eqref{eq:root_mST}.

If the $\kappa_j$ are not all pure imaginary, we define
\begin{align*}
 \Phi_1 & := \sum_{j=1}^t b'_{2,j}  \left[ C\cdot\Re(\kappa_j) \right], \\
 \Phi_2 & := b'_{2,0} \left[ C \cdot \tfrac{2\pi}{w} \right] + \sum_{j=1}^t b'_{2,j}  \left[ C\cdot\Im(\kappa_j) \right].
\end{align*}
Then note that
\[ |C\Lambda-(\Phi_1+\Phi_2\sqrt{-1})|\leq T. \]
Therefore
\[ |(\Phi_1+\Phi_2\sqrt{-1})|\leq T+|C\Lambda|.\]
We know from \eqref{eqn:lambda} that $ 2e^{-c_{13}(\ell)B} \geq |\Lambda| $, so
\[ |(\Phi_1+\Phi_2\sqrt{-1})|\leq T+2Ce^{-c_{13}(\ell)B} .\]
Now notice that the vector \[\by=(b'_{2,1}, b'_{2,2},\dots, b'_{2,t-1},\Phi_1, \Phi_2)^\intercal\] is in the lattice $\Lat$, so $|\by|\geq m_{\Lat}$.  Further,
\begin{eqnarray*}
m_{\Lat}^2&\leq & |\by|^2=\sum_{j=1}^{t-1}(b'_{2,j})^2+\Phi_1^2+\Phi_2^2 \\
&\leq &(t-1)K_1^2 + \left| \Phi_1+\Phi_2\sqrt{-1} \right|^2 \\
&\leq &  S +  (T+2Ce^{-c_{13}(\ell)B})^2,
\end{eqnarray*}
which implies \eqref{eq:root_mST} and the result follows.

In case the $\kappa_j$ are all pure imaginary, the approach is similar. Set
\[\Phi=b'_{2,0}[C\cdot \frac{2\pi}{w}]+\sum_{j=1}^t b'_{2,j}  [C\cdot\Im(\kappa_j)]. \]
Similar to the other case, we have $\left| C \Lambda- \left( \Phi\sqrt{-1} \right) \right| \leq T$, and therefore $|\Phi| \leq T + |C\Lambda|$. Again applying  \eqref{eqn:lambda} we obtain
\[ |\Phi|\leq T+2Ce^{-c_{13}(\ell)B}. \]
Now notice that the vector \[\by=(b'_{2,1}, b'_{2,2},\dots, b'_{2,t},\Phi)^\intercal\] is in the lattice $\Lat$, so $|\by|\geq m_{\Lat}$.  Further,
\begin{eqnarray*}
m_{\Lat}^2&\leq & |\by|^2=\sum_{i=1}^{t}(b'_{2,i})^2+\Phi^2\\
&\leq &tK_1^2+|\Phi|^2\\
&\leq &  S +  (T+2Ce^{-c_{13}(\ell)B})^2.
\end{eqnarray*}
Again this implies \eqref{eq:root_mST}.
\end{proof}

\subsection{Real places}\label{sec:LLL_real_case}
Now suppose that $\mathfrak{p}_{\ell}$ is a real infinite place. Although the arguments in \S\ref{sec:LLL_complex_case} apply to $\mathfrak{p}_\ell$, we can obtain a stronger improvement by analyzing this case separately. Replacing $\rho_j$ by $-\rho_j$ as necessary, we may assume $\rho_j^{(\ell)} > 0$ for all $j$ with $1 \leq j \leq t$.

The mere existence of a real place forces $w = 2$ and $0 \leq b_{2,0} \leq 1$. We set $\kappa_0 := \pi \sqrt{-1}$ and define the real numbers
\[ \kappa_j := \log \rho_j^{(\ell)}, \qquad 1 \leq j \leq t. \]
As the $\kappa_j \in \R$, we may revisit \eqref{eq:Lambda_linear_logs}; this time we obtain
\[ \Lambda = \log \tau_2^{(\ell)} = \sum_{j=0}^t b_{2,j} \kappa_j, \]
as no adjustments are required to accommodate the branch cut of the logarithm. Set
\[ S := (t-1) K_1^2, \qquad T := \tfrac{1}{2} (tK_1 + 1), \]
and again let $\Lat$ be generated by the columns of the matrix $A$ in \eqref{eq:lattice_matrix}. When $\mathfrak{p}_\ell$ is an infinite real place, we define:
\[ K_1^\mathrm{LLL}(\ell) := \max \left\{4, w, \frac{1}{c_{13}(\ell)} \log \left( \frac{2C}{ (m_\Lat^2 - S)^\frac{1}{2} - T} \right) \right\}. \]
\begin{lemma}\label{lem:real_reduction}
Suppose that $\mathfrak{p}_{\ell}$ is a real infinite place. With notation and definitions as above, suppose $C$ is chosen so that $m_\Lat^2 > T^2 + S$. Then $B \leq K_1^{\mathrm{LLL}}(\ell)$.
\end{lemma}
\begin{proof}
If we define
\[ \Phi_1 := \sum_{j=1}^t b_{2,j} [C \cdot \kappa_j], \quad \Phi_2 := b_{2,0} [C \cdot \pi], \]
then we obtain
\[ |C \Lambda - \Phi_1 - \Phi_2\sqrt{-1}| \leq T. \]
Observing that the vector
\[ \by = (b_{2,1},\ b_{2,2},\ \cdots,\ b_{2,t-1},\ \Phi_1,\ \Phi_2)^\intercal \in \Lat, \]
and that $|b_{2,j}| \leq B$ for $j > 0$, $|b_{2,0}| \leq 1$, the remainder of the proof now follows the logic of Lemma \ref{lem:complex_reduction} exactly.
\end{proof}

\subsection{Implementation}\label{sec:implementation}
The function \texttt{minimal\_vector} is used in the implementation to compute a value for $m_\Lat^2$.  In \texttt{cx\_LLL\_bound}, we have implemented the reduction step for the infinite places applying the above idea. As in the finite case, the parameter $C$ is chosen inside the function and changed as necessary to meet the bound $m_\Lat^2 > T^2 + S$ (keeping in mind, of course, that the definitions of $S$ and $T$ depend on the particular place $\mathfrak{p}_\ell$). Notice that the proof of Lemmas \ref{lem:complex_reduction} and \ref{lem:real_reduction} depend on obtaining true rounding in obtaining the coefficients of the matrix $A$.  In our implementation, we increase precision until this is assured. Similar to the case where $\mathfrak{p}_\ell$ is finite, the improvement of Lemma \ref{lem:complex_reduction} needs only the assumption that $\mathfrak{p}_\ell$ is the extremal place and that some bound $K_1$ on the exponents is known. So we apply Lemma \ref{lem:complex_reduction} repeatedly until no further improvement to $K_1^\mathrm{LLL}(\ell)$ is possible. Once this has been done for each infinite place, we set
\[ K_1^\mathrm{LLL} := \max \{ K_1^\mathrm{LLL}(\ell) : \mathfrak{p}_\ell \in S_\infty \}. \]
Consequently, we have the following bound which may be passed to the sieve in the next section.
\begin{lemma}\label{lem:LLL_bounds_combined}
  Assume that for each $\ell$, a value $u = u(\ell)$ or $C = C(\ell)$ exists for which the hypotheses of one of the Lemmas \ref{lem:p_adic_reduction}, \ref{lem:complex_reduction}, or \ref{lem:real_reduction} are met. Then the maximum exponent $B$ appearing in any solution $(\tau_1, \tau_2)$ of the $S$-unit equation \eqref{introeqn} satisfies
  \begin{equation}\label{eqn:LLL_ineq}
  B \leq K^\mathrm{LLL} := \max \left\{ K_0^\mathrm{LLL}, K_1^\mathrm{LLL} \right\}.
  \end{equation}
\end{lemma}
We use the proof as an opportunity to summarize the algorithm, up to the sieving step of the next section.
\begin{proof}
There is nothing to show if the solution set is empty, so let us assume otherwise.
We know the $S$-unit equation has only finitely many solutions. Keeping the notation of \eqref{eqn:tau_i}, let $(\tau_1, \tau_2)$ be a solution where $B = |\mathbf{a}_i|$ is maximized. One of the places in $S$, say $\mathfrak{p}_\ell$, is extremal. If $\mathfrak{p}_\ell$ is finite, then the work in \S\ref{sec:pl_fin} demonstrates ${B \leq \max \{4, w, K_0(\ell)\}}$ by applying one of the corollaries deduced from Yu's bound. If $\mathfrak{p}_\ell$ is infinite, then the work in \S\ref{sec:pl_inf} demonstrates ${B \leq \max \{4, w, K_1(\ell)\}}$ by applying the theorem of Baker-W\"ustholz. This establishes an absolute bound on $B$.

For each possible $\ell$, the techniques of this section attempt to replace this absolute bound with a smaller bound. There is no mathematical proof that the lattice reduction techniques will succeed, i.e., that there will exist appropriate values $u$ and $C$ for which Lemmas \ref{lem:p_adic_reduction}, \ref{lem:complex_reduction}, or \ref{lem:real_reduction} apply. However, when they do exist, the improved bound is provably correct by the same lemmas. Here, such success is presumed for every $\ell$, and \eqref{eqn:LLL_ineq} holds.
\end{proof}
In practice, if the hypotheses of Lemma \ref{lem:LLL_bounds_combined} are not established, then one only has the weaker bounds coming from linear forms of logarithms -- these are simply too large to allow for a provably complete search. However, the sieve described in the next section can still be used up to any prescribed bound $B_0$; it will find all solutions satisfying $|\mathbf{a}_i| \leq B_0$.

\section{Further Reducing the Search Space: Sieving}\label{sec:sieve}

The approach taken here, for sieving against primes outside of $S$, is based on an algorithm described by Smart in \cite{Smart:1995}. Smart credits Tzanakis and de Weger with this approach \cite{Tzanakis-deWeger:1991}; Tzanakis reports that these ideas date back to Andrew Bremner.

\subsection{Setup for the sieve}
Recalling the notations of \S\ref{sec:sols_Sunit}, we define for any $m > 0$,
\[ A_{K,S,m} := (\Z/w\Z) \times (\Z/m\Z)^t.\]
This finite set will provide a useful search space for exponent vectors in a way we will make more precise below. There is an obvious surjective map $\pi_m \colon A_{K,S} \to A_{K,S,m}$. Despite the fact that this map is the identity (and not a reduction map) in the $0$th coordinate, we will refer to this as the \emph{reduction modulo $m$} map, and call an element $\mathbf{a} \in A_{K,S,m}$ an \emph{exponent vector modulo $m$}.

Let $\tau \in \OKS^\times$. The \emph{exponent vector} for $\tau$ (relative to $\brho$) is $\Phi_\brho^{-1}(\tau)$. That is, it is the unique $\mathbf{a} \in A_{K,S}$ such that $\tau = \brho^\mathbf{a}$. Given any bound $B$ for the exponent vector of a $\tau \in X_{K,S}$, we obtain a finite subset of $\OKS^\times$ that contains every solution of the $S$-unit equation. Unfortunately, this is usually still too large of a search space to be practical (see \S\ref{sec:observ}), so we must sieve this finite set (or rather, the equivalent finite set of exponent vectors) prior to the exhaustive search. The sieve attempts to provide an efficient solution to the following problem:
\begin{problem}
Find a small set $Y_{K,S}$ satisfying $E_{K,S} \subseteq Y_{K,S} \subseteq A_{K,S}$.
\end{problem}
If we can find a small enough superset $Y_{K,S}$ in a fast enough way, the $S$-unit equation solutions can then be found by brute force search over $Y_{K,S}$.

Suppose $\mathbf{a} \in A_{K,S}$. We call $\mathbf{b} \in A_{K,S}$ a \emph{complement vector} for $\mathbf{a}$ if $\uprho^\mathbf{a} + \uprho^\mathbf{b} = 1$. If a complement vector exists, it must be unique; the existence of a complement vector is equivalent to $\mathbf{a} \in E_{K,S}$, and a pair of complement exponent vectors correspond to a solution of the $S$-unit equation.

Suppose $q \in \Z$ is a prime number. We say $q$ \emph{avoids} $S$ if $q \not\in \mathfrak{p}$ for all ideals $\mathfrak{p} \in S$. If $q$ splits completely in $\OK$, then there are $d_K$ prime ideals above $q$ in $\OK$, say $\mathfrak{q}_0,\dots,\mathfrak{q}_{d_K-1}$. We let $\F_{\mathfrak{q}_j}$ denote the residue field of $\mathfrak{q}_j$. Since $q$ is completely split, we of course have $\F_{\mathfrak{q}_j} \cong \F_q$ for all $j$.

Suppose $\tau \in \OKS$, and $q$ is a rational prime number which splits completely in $\OK$ and which avoids $S$. The \emph{residue field vector} for $\tau$ (with respect to $q$) is
  \[ \rfv_q(\tau) := (\tau + \mathfrak{q}_0, \tau + \mathfrak{q}_1, \dots, \tau + \mathfrak{q}_{d_K-1}) \in \prod_{i=0}^{d_K-1} \F_{\mathfrak{q}_i}, \]
  where $\tau + \mathfrak{q}_j \in \F_{\mathfrak{q}_j}$ is the reduction of $\tau$ modulo $\mathfrak{q}_j$. The residue field vector depends on the ordering of the primes $\mathfrak{q}_j$ above $q$; we fix one ordering once and for all whenever we consider residue field vectors with respect to $q$.

Notice that we have the following commutative diagram, whose horizontal rows are exact.
\[
 \begin{tikzcd}
  & & & \prod_i \F_{\mathfrak{q}_i}^\times & \\
  1 \arrow[r] & \OKS^\times \cap (1 + q\OKS) \arrow[r] &
     \OKS^\times \arrow[r, "\rfv_q"] &
     \rfv_q(\OKS^\times) \arrow[r] \arrow[u, hook, "\mbox{\rotatebox{90}{$\subseteq$}}"] & 1 \\
  & (\OKS^\times)^{q-1} \arrow[u, hook, "\mbox{\rotatebox{90}{$\subseteq$}}"] &
     & & \\
  0 \arrow[r] & \{0 \} \times \bigl( (q-1)\Z \bigr)^t \arrow[u, hook, "\Phi_\uprho"] \arrow[r] &
     A_{K,S} \arrow[r, "\pi_{q-1}"] \arrow[uu, "\Phi_\uprho", "\cong"'] &
     A_{K,S,q-1} \arrow[r] \arrow[uu, dashed, "\exists"] & 0 \\
\end{tikzcd}
\]
  Suppose $\mathbf{a} \in A_{K,S,q-1}$. Since any two lifts $\mathbf{a}'$, $\mathbf{a}''$ of $\mathbf{a}$ to $A_{K,S}$ differ by a multiple of $(q-1)$, we see that $\Phi_\uprho(\mathbf{a}')$ and $\Phi_\uprho(\mathbf{a}'')$ differ by a perfect $(q-1)$th power, and so determine the same residue field vector. In other words, the dashed arrow in the diagram corresponds to a well-defined map $A_{K,S,q-1} \to \prod \F_{\mathfrak{q}_i}^\times$, and so the notion of a residue field vector for $\mathbf{a}$ is well-defined. With this in mind, we abuse notation slightly and also write $\rfv_q \mathbf{a} := \rfv_q \Phi_\brho (\mathbf{a}')$, where $\mathbf{a}' \in A_{K,S}$ is any lift of $\mathbf{a}$.
\begin{lemma}
  Suppose $\tau \in X_{K,S}$ and set $\eta = 1 - \tau$. Then
  \begin{enumerate}[itemsep=1pt, label=(\alph*)]
    \item $\rfv_q \tau + \rfv_q \eta = (1,1,\dots,1) \in \prod_i \F_{\mathfrak{q}_i}$.
    \item $\rfv_q \tau \in \prod_i \F_{\mathfrak{q}_i}^\times$.
    \item no entry of $\rfv_q \tau$ is $1$.
  \end{enumerate}
\end{lemma}
\begin{proof}
  Since $\tau + \eta = 1$, it follows that for any $j$, $\tau + \eta \equiv 1 \pmod{\mathfrak{q}_j}$, verifying (a). As $q$ avoids $S$, $\tau \not\in \mathfrak{q}_j$ for every $j$. This proves (b). Since (b) holds for both $\eta$ and $\tau$, (c) follows from (a).
\end{proof}

Suppose $\mathbf{a}$ is an exponent vector modulo $q-1$; i.e., $\mathbf{a} \in A_{K,S,q-1}$. We call $\mathbf{b} \in A_{K,S,q-1}$ a \emph{$(q-1)$-complement vector} for $\mathbf{a}$ if
\[ \rfv_q(\mathbf{a}) + \rfv_q(\mathbf{b}) = (1,1,\dots,1) \in \prod_i \F_{\mathfrak{q}_i}. \]
Existence of a $(q-1)$-complement vector is a necessary, but not sufficient, condition for $\mathbf{a}$ to lift to the exponent vector of a unit in a solution to the $S$-unit equation. Further, any particular $\mathbf{a}$ may have more than one $(q-1)$-complement vector associated to it. We set
\[ E_{K,S}(q-1) := \{ \mathbf{a} \in A_{K,S,q-1} : \mathbf{a} \text{ has a $(q-1)$-complement vector} \}. \]

\subsection{Execution of the sieve}
The strategy for the sieve is to play the sets $E_{K,S}(q-1)$ off of one another for multiple values of $q$. Choose a finite list $Q$ of rational prime numbers
\[Q = [q_0, q_1, \dots, q_{k-1}], \]
each of which splits completely in $K$ and avoids $S$, and such that
\[ \lcm(q_0-1, q_1-1, \dots, q_{k-1}-1) \geq 2B + 1. \]
Any true solution to the $S$-unit equation corresponds to exponent vectors found in the set $E_{K,S}$, and such vectors must reduce modulo $(q_j-1)$ to vectors in $E_{K,S}(q_j-1)$ for each $q_j \in Q$. Conversely, given a choice $\mathbf{a}_i \in E_{K,S}(q_i - 1)$ for each $0 \leq i < k$, there is at most one vector $\mathbf{a} \in A_{K,S}$ such that $\pi_{q_i-1}(\mathbf{a}) = \mathbf{a}_i$  for each $i$, while also satisfying $|\mathbf{a}| \leq B$. Define $\pi_Q$ to be the product of the maps $\pi_{q_i-1}$:
\[ \pi_Q \colon A_{K,S} \longrightarrow \prod_i A_{K,S,q_i-1}. \]
Certainly we have
\[ E_{K,S} \subseteq \pi_Q^{-1} \bigl( \prod_i E_{K,S}(q_i - 1) \bigr). \]
Because lifts from $\prod_i E_{K,S}(q_i -1)$ to $E_{K,S}$ are unique when they exist, $\prod_i E_{K,S}(q_i - 1)$ provides a reasonable proxy for the search space. We seek to replace each $E_{K,S}(q_i - 1)$ with a subset $Y_i \subseteq E_{K,S}(q_i - 1)$ such that we still have
\begin{equation}\label{eq:Ycond}
  E_{K,S} \subseteq \pi_Q^{-1} \bigl( \prod_i Y_i \bigr).
\end{equation}

Suppose $q_i, q_j$ are distinct primes in $Q$, and suppose $\mathbf{a}_i \in Y_i$, $\mathbf{a}_j \in Y_j$. We say $\mathbf{a}_i$ and $\mathbf{a}_j$ are \emph{compatible} if there exists $\mathbf{a} \in A_{K,S}$ such that $\pi_{q_i - 1}(\mathbf{a}) = \mathbf{a}_i$ and $\pi_{q_j - 1}(\mathbf{a}) = \mathbf{a}_j$. Notice that for any $i \neq j$, an element $\hat{\mathbf{a}} \in E_{K,S}$ reduces modulo $q_i - 1$ and $q_j - 1$ to produce a compatible pair of exponent vectors.

When $\mathbf{a}_i$ and $\mathbf{a}_j$ are compatible, we further call the pair \emph{complement compatible} if there exist $\mathbf{b}_i \in Y_i$ and $\mathbf{b}_j \in Y_j$ such that
\begin{itemize}
  \item $\mathbf{b}_i$ is $(q_i - 1)$-complementary to $\mathbf{a}_i$,
  \item $\mathbf{b}_j$ is $(q_j - 1)$-complementary to $\mathbf{a}_j$,
  \item $\mathbf{b}_i$ and $\mathbf{b}_j$ are compatible.
\end{itemize}

\begin{lemma}
Suppose the sets $Y_i \subseteq E_{K,S}(q_i - 1)$ satisfy condition \eqref{eq:Ycond}. Further, suppose $\mathbf{a}_i \in Y_i$, and set
  \[ Y'_j := \begin{cases} Y_j & j \neq i \\ Y_i - \{ \mathbf{a}_i \} & j = i \end{cases}. \]
  If there exists $j \neq i$ such that $Y_j$ contains no vectors which are complement compatible to $\mathbf{a}_i$, then
  \[ E_{K,S} \subseteq \pi_Q^{-1} \bigl( \prod_i Y_i' \bigr). \]
\end{lemma}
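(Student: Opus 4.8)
The plan is to argue by contradiction. Suppose some $\hat{\mathbf{a}} \in E_{K,S}$ does not lie in $\pi_Q^{-1}\bigl( \prod_k Y'_k \bigr)$. By hypothesis \eqref{eq:Ycond} we have $\hat{\mathbf{a}} \in \pi_Q^{-1}\bigl( \prod_k Y_k \bigr)$, so $\pi_{q_k - 1}(\hat{\mathbf{a}}) \in Y_k$ for every $k$. Since $Y'_k = Y_k$ for $k \neq i$ and $Y'_i = Y_i - \{\mathbf{a}_i\}$, the failure to lie in $\pi_Q^{-1}\bigl( \prod_k Y'_k \bigr)$ can only be caused by $\pi_{q_i - 1}(\hat{\mathbf{a}}) = \mathbf{a}_i$. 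The goal is then to exhibit a vector in $Y_j$ that is complement compatible to $\mathbf{a}_i$, contradicting the hypothesis on $Y_j$.

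To build this witness, first use that $\hat{\mathbf{a}} \in E_{K,S}$: there is a complement vector $\hat{\mathbf{b}} \in E_{K,S}$ with $\brho^{\hat{\mathbf{a}}} + \brho^{\hat{\mathbf{b}}} = 1$, and again by \eqref{eq:Ycond} we have $\pi_{q_k - 1}(\hat{\mathbf{b}}) \in Y_k$ for every $k$. Set $\mathbf{a}_j := \pi_{q_j - 1}(\hat{\mathbf{a}}) \in Y_j$, $\mathbf{b}_i := \pi_{q_i - 1}(\hat{\mathbf{b}}) \in Y_i$, and $\mathbf{b}_j := \pi_{q_j - 1}(\hat{\mathbf{b}}) \in Y_j$. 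Then $\mathbf{a}_i$ and $\mathbf{a}_j$ are compatible (both are reductions of $\hat{\mathbf{a}}$), and $\mathbf{b}_i$ and $\mathbf{b}_j$ are compatible (both are reductions of $\hat{\mathbf{b}}$); here we use $i \neq j$, so $q_i$ and $q_j$ are distinct primes of $Q$.

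It remains to verify the complementarity relations. Reducing $\brho^{\hat{\mathbf{a}}} + \brho^{\hat{\mathbf{b}}} = 1$ modulo each prime above $q_i$ gives $\rfv_{q_i}(\brho^{\hat{\mathbf{a}}}) + \rfv_{q_i}(\brho^{\hat{\mathbf{b}}}) = (1,\dots,1)$; since the residue field vector attached to an exponent vector depends only on its class modulo $q_i - 1$ (the well-definedness of $\rfv_{q_i}$ on $A_{K,S,q_i-1}$ recorded via the commutative diagram in \S\ref{sec:sieve}), this reads $\rfv_{q_i}(\mathbf{a}_i) + \rfv_{q_i}(\mathbf{b}_i) = (1,\dots,1)$, i.e.\ $\mathbf{b}_i$ is $(q_i-1)$-complementary to $\mathbf{a}_i$. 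The identical argument at $q_j$ shows $\mathbf{b}_j$ is $(q_j-1)$-complementary to $\mathbf{a}_j$. Combined with the two compatibility statements from the previous paragraph, this is exactly the assertion that the pair $(\mathbf{a}_i, \mathbf{a}_j)$ is complement compatible, so $\mathbf{a}_j \in Y_j$ is complement compatible to $\mathbf{a}_i$ — contradicting the hypothesis. Hence no such $\hat{\mathbf{a}}$ exists, and $E_{K,S} \subseteq \pi_Q^{-1}\bigl( \prod_k Y'_k \bigr)$.

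The argument is essentially bookkeeping with the reduction maps, and I do not expect a genuine obstacle; the only point that needs a moment's care is the last one, namely that the additive congruence in $\OKS$ really translates into $\rfv$-complementarity of the \emph{reduced} vectors $\mathbf{a}_i, \mathbf{b}_i$ (and $\mathbf{a}_j, \mathbf{b}_j$) rather than of some particular lifts — but this is precisely the well-definedness of the residue field vector on $A_{K,S,q-1}$ established just before the statement.
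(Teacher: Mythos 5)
Your proof is correct and follows essentially the same route as the paper: assume a surviving $\hat{\mathbf{a}} \in E_{K,S}$ reduces to $\mathbf{a}_i$, take its complement $\hat{\mathbf{b}}$, and observe that the reductions of $\hat{\mathbf{a}}$ and $\hat{\mathbf{b}}$ at $q_i$ and $q_j$ form a complement-compatible witness in $Y_j$, contradicting the hypothesis. Your writeup is in fact slightly more explicit than the paper's --- you spell out (via \eqref{eq:Ycond}) why $\mathbf{a}_j, \mathbf{b}_i, \mathbf{b}_j$ land in the required $Y_k$'s and why the additive relation descends to $\rfv$-complementarity of the reduced vectors, steps the paper leaves implicit.
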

In other words, under the given condition, we will lose no true solutions by removing $\mathbf{a}_i$ from $Y_i$.
\begin{proof}
Towards a contradiction, suppose $\mathbf{a} \in E_{K,S}$ satisfies
\[ \pi_{q_i-1}(\mathbf{a}) = \mathbf{a}_i. \]
There is a unique $\mathbf{b} \in E_{K,S}$ satisfying $\Phi_\uprho(\mathbf{a}) + \Phi_\uprho(\mathbf{b}) = 1$. Set
\[ \mathbf{a}_j = \pi_{q_j-1}(\mathbf{a}), \quad \mathbf{b}_i = \pi_{q_i-1}(\mathbf{b}), \quad \mathbf{b}_j = \pi_{q_j-1}(\mathbf{b}). \]
Then $\mathbf{a}_i$ and $\mathbf{a}_j$ are compatible by definition. But since $\mathbf{a}_i$ and $\mathbf{a}_j$ cannot be complement compatible, the vectors $\mathbf{b}_i$ and $\mathbf{b}_j$ cannot be compatible. This is impossible, since $\mathbf{b} \in E_{K,S}$. Thus, no such $\mathbf{a}$ exists and the claim holds.
\end{proof}

The algorithm based on this lemma is the following.
\begin{algorithm}[Sieve]
Assume that $K$, $S$ are fixed and a representation of $\OKS^\times$ has been computed.
\begin{description}
  \item[\textbf{INPUT}] $Q = [q_0, q_1, \dots, q_{k-1}]$
  \item[\textbf{OUTPUT}] $Y_0, Y_1, \dots, Y_{k-1}$ satisfying \eqref{eq:Ycond}.
\end{description}
\begin{enumerate}[label={\bf \arabic*.}]
  \item Set $Y_i \longleftarrow E_{K,S}(q_i-1)$ for each $i$.
  \item Loop over $i \in \{0, 1, \dots, k-1 \}$:
     \begin{enumerate}[label=(\alph*)]
        \item Loop over $\mathbf{a}_i \in Y_i$:
        \begin{enumerate}[label=\roman*.]
          \item If $Y_i$ contains no $(q_i - 1)$-complement vector
                for $\mathbf{a}_i$, remove $\mathbf{a}_i$ from $Y_i$
          \item Loop over $j \in \{0, 1, \dots, i-1, i+1, \dots, k-1 \}$:
          \begin{itemize}[label=$\bullet$]
             \item If there are no $\mathbf{a}_j \in Y_j$ which are complement
                   compatible with $\mathbf{a}_i$, then remove $\mathbf{a}_i$ from $Y_i$.
          \end{itemize}
        \end{enumerate}
      \end{enumerate}
  \item Did Step $2$ remove any elements from any set $Y_i$?
  \begin{itemize}
    \item If \textbf{YES}, return to Step \textbf{2}.
    \item If \textbf{NO}, then \textbf{STOP}.
  \end{itemize}
\end{enumerate}
\end{algorithm}
Once the sieve has been completed, we may find all solutions to the $S$-unit equation by doing an exhaustive search over $\pi_Q^{-1}(\prod_i Y_i)$.

\section{Experimental Observations and Computational Choices}\label{sec:observ}
In developing this code and in pursuit of applications, we have computed a very large number of examples. Some observations and discussion may be enlightening to a reader who wishes to solve the $S$-unit equation for their own application.

Our implementation provides the function {\tt sieve\_below\_bound(K, S, B)}, which returns all solutions to the $S$-unit equation in $\OKS^\times$ up to a specified bound $B$ (the maximum absolute value of an entry in an exponent vector). This may be useful in settings where an exhaustive list of solutions is not needed. For example, in the field $K_g$ with ${g(x) = x^3 - 3x + 1}$, and $S_\mathrm{fin} = \{\mathfrak{p} : \mathfrak{p} \mid 2 \}$, the provable LLL-reduced exponent bound is $101$. However, all solutions actually satisfy the exponent bound $5$, and the command {\tt sieve\_below\_bound(K, S, 5)} executes in under $2$ seconds.

\subsection{Sieving vs. simple exhaustion}
Once a bound has been reduced as much as possible by LLL, this search space must be somehow exhausted. This general problem can be solved in multiple ways.  Those appearing in the literature can be generally described by the following three ideas:
\begin{enumerate}
  \item simple (non-number theory-based) exhaustion,
  \item sieve by reducing the problem modulo primes not in $S$, and
  \item sieve by reducing the problem modulo powers of primes in $S$.
\end{enumerate}
Idea (1) could be looked at through the more general lens of efficient programming, and a good programmer may be able to develop their own code to exhaust the search space effectively.  The current implementation uses idea (2), inspired by Smart's earlier exposition in \cite{Smart:1995} and is described here in Section \ref{sec:sieve}.  Item (3) paraphrases an interesting idea which is first due to de Weger in the case $K=\mathbb{Q}$ \cite{deWeger:1987} and which was generalized to arbitrary number fields $K$ for $S$-unit equations arising from Thue and Thue-Mahler equations by Tzanakis and de Weger \cite{Tzanakis-deWeger:1989, Tzanakis-deWeger:1991, Tzanakis-deWeger:1992}.  Wildanger \cite{Wildanger:2000} and Smart \cite{Smart:1999} worked out the details of the full generalization, which was later simplified by Evertse and Gy{\H o}ry \cite{Evertse-Gyory:2015}. This is an extremely promising and potentially effective method of reducing the search space, and has been implemented recently in special cases by several people, including Koutsianas \cite{Koutsianas:2017}, Bennett, Gherga, and Rechnitzer \cite{Bennett:2018}, von K\"{a}nel and Matschke \cite{vonKanel-Matschke:2016}, and others.  Future work will certainly focus on including this sieving technique for our functions.

In all these methods, we begin with the same search space as in (1), and the computational complexity of a brute force search is easy to estimate. Let $B$ be a bound for the maximum absolute value of an exponent in a solution to the $S$-unit equation. Since we are searching for a pair $(\tau_1, \tau_2)\in \left(\OKS^{\times}\right)^2$, the size of our search space is given by
\[ |A_{K,S}|^2 = w^2 (2B+1)^{2t}.\]
Thus a na\"{\i}ve brute force search has complexity $O\left(w^2(2B+1)^{2t}\right)$.  In practice, a simple exhaustive search can be carried out by checking, for each element $\tau_1$ of $A_{K,S}$, whether $1-\tau_1$ is an $S$-unit.  Assuming this check has constant time for a fixed $K$ and $S$, we get the less extreme complexity of $O\left(w(2B+1)^t\right)$.

In carrying out computations, we find that the resources required to sieve a search space vary greatly, even for number fields of the same degree and $S$-unit groups of the same rank. For example, we give the run time for three fields $K_g$, where $S_f$ is the set of primes above $3$ in $K_g$, in Table \ref{table:examples}. The column $N$ gives the total number of distinct solutions found. In each case, the LLL-reduced bound is below $40$, so complete sets of solutions are found in each case. Computations were performed in a paid account on the CoCalc platform in late 2018.
\begin{table}[h!]
\caption{Runtimes for {\tt sieve\_below\_bound(K, S, 40)}}
\label{table:examples}
\begin{tabular}{ccccc}
\toprule
$g(x)$ & $t$ & $w$ & $N$ & Runtime (in seconds) \\
\midrule
$x^4 - x^2 + 1$ & $2$ & $12$ & 16 & \phantom{0}1.16 \\
$x^4 + 9$ & $2$ & $4$ & 0 & \phantom{0}2.06 \\
$x^4 + 12x^2 + 18$ & $2$ & $2$ & 0 & 64.\phantom{00} \\
\bottomrule
\end{tabular}
\end{table}

The resources required depend on the size of the search space, but also can vary greatly based on the particular list of primes $Q$ chosen for the sieve, and even the order of those primes! In many cases, the sieve greatly reduces the time required to exhaust the space. In others, a brute force search of the reduced search space can actually be a better choice, as the sieving computation can take a mysteriously long time. Finding a way to understand and predict these difficulties is a priority for future work.  The implementation of idea (3) could also make this unnecessary. In all cases, it is worthwhile to find the smallest reduced bound possible, whether as input for the built-in sieve or for use in a brute force search.

\subsection{Finite place vs.~infinite place bounds}

In general, we find that the LLL-reduced bounds corresponding to $\mathfrak{p}_{\ell}$ infinite are smaller than the bounds for $\mathfrak{p}_{\ell}$ finite. To illustrate this, let $\mathscr{K}$ be the set of $85$ number fields $K$ satisfying
\[ 1 \leq [K:\Q] \leq 5, \qquad \Delta_K = \pm 2^a 3^b. \]
If $N \in \Z$, we set
\[ \begin{split}
S_{\mathrm{fin},K,N} & := \{ \mathfrak{p} \subseteq \OK : \mathfrak{p} \mid N \}, \\
S_{K,N} & := S_{\mathrm{fin}, K, N} \cup S_\infty \end{split} \]

For any choice of $K \in \mathscr{K}$ and $S = S_{K,N}$ where $N \in \{2, 3, 6 \}$, we have computed the LLL-reduced bounds under the assumption that $\mathfrak{p}_\ell$ is finite and under the assumption that $\mathfrak{p}_\ell$ is infinite. Complete bound data is available by email request to authors Malmskog or Rasmussen. Here we will consider only the case $S = S_{K,2}$. Now, let $B_1(K)$ and $B_2(K)$ be the bounds obtained in \S\ref{sec:LLL_red} under the assumption that $\mathfrak{p}_\ell$ is a finite or infinite place, respectively. In Figure \ref{fig:boundchart}, we plot both $B_1(K)$ and $B_2(K)$ against the root discriminant of $K$ (which ranges from $1.74$ to $26.56$ in $\mathscr{K}$.) The bound $B_1(K)$ usually exceeds $B_2(K)$, on average by a factor of $\approx 3.00$.

\begin{figure}[ht!]
  \includegraphics[width=.99\textwidth]{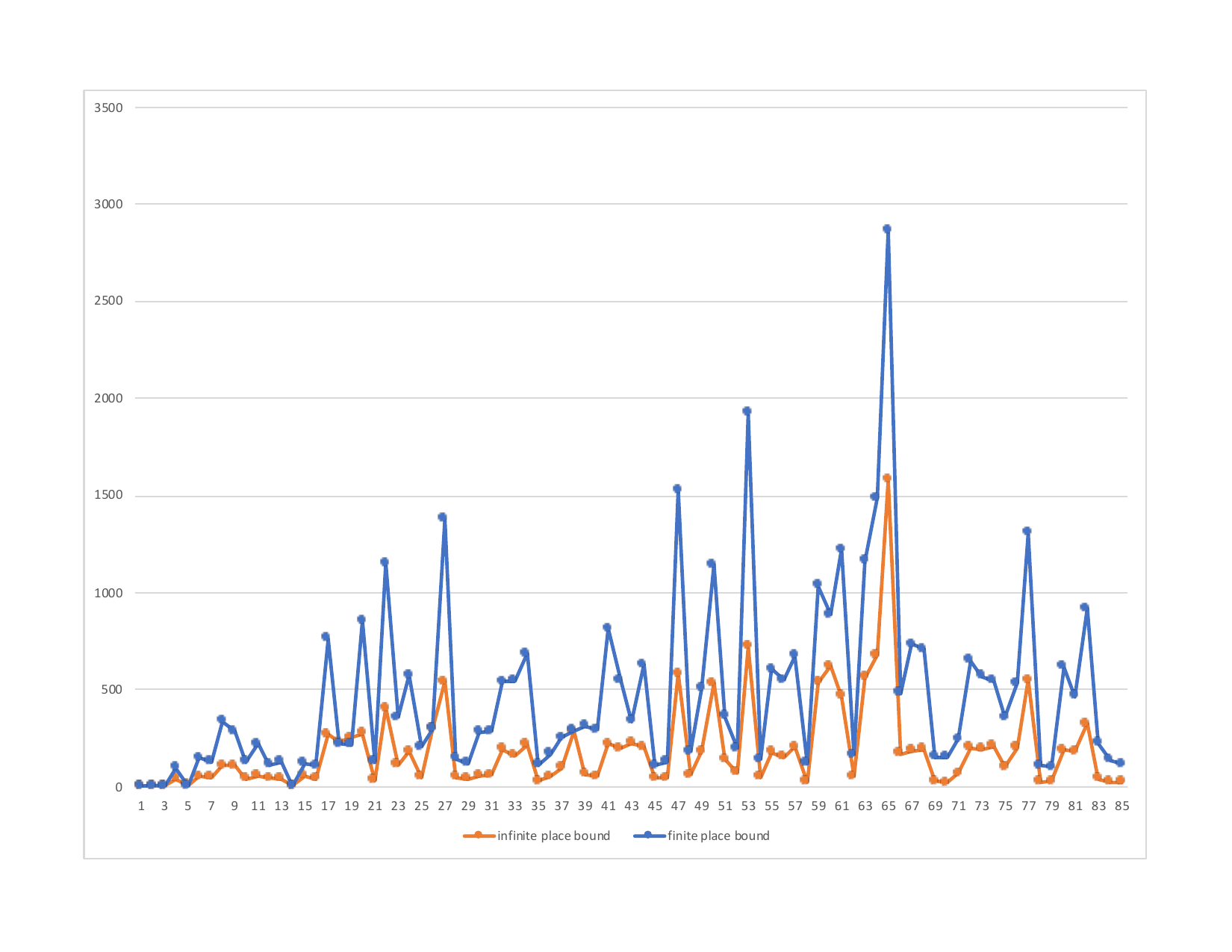}
  \caption{Bounds $B_1(K)$, $B_2(K)$ (vertical axis) for $S = S_{K,2}$ and $K \in \mathscr{K}$, plotted in order of increasing $\Delta_K^{1/d_K}$}
  \label{fig:boundchart}
\end{figure}

Because the disparity between these bounds is so large, we would prefer to use $B_2(K)$. Generally, we have no control over whether $\mathfrak{p}_\ell$ is finite or infinite. However, if $S$ contains only one finite place, a small trick allows us to use $B_2(K)$. If $(\tau_1,\tau_2)\in \OKS^{\times}$ is a solution to the $S$-unit equation, note that $(\frac{1}{\tau_1}, \frac{-\tau_2}{\tau_1})$ and $(\frac{1}{\tau_2}, \frac{-\tau_1}{\tau_2})$ are also $S$-unit equation solutions.  We define the \textit{solution cycle} of $\tau_1$ to be
\[C(\tau_1):=\left\{\tau_1, 1-\tau_1, \frac{1}{\tau_1}, 1-\frac{1}{\tau_1}, \frac{1}{1-\tau_1}, 1-\frac{1}{1-\tau_1}\right\}.\]
The following result is a restatement of \cite[Lemma 6.3]{Malmskog-Rasmussen:2016}.

\begin{lemma}\label{lemma:infinite_place}
Let $K$ be a number field, and suppose $S$ is a finite set of places of $K$ containing all infinite places and at most one finite place, (i.e. $\left| S_\mathrm{fin} \right| = 1$). Let $(\tau_1, \tau_2)$ be a solution to the $S$-unit equation over $K$. Then at least one element of $C(\tau_1)$ belongs to a solution with $\mathfrak{p}_\ell$ corresponding to an infinite place.
\end{lemma}

This implies that under the hypothesis of the lemma, some representative of each solution cycle has an exponent vector bounded by $B_2(K)$; recovering the entire solution cycle from one representative is trivial. Thus, we can determine all solutions to the $S$-unit equation.

It may seem that the hypothesis of Lemma \ref{lemma:infinite_place} -- that there is only one finite place in $S$ -- is a rather specialized condition. However, many interesting arithmetic applications involve searching for objects with ``good'' behavior away from one prime $p$. In such cases, we take $S = S_{K,p}$. Should $p$ ramify in $K$, the condition $|S_\mathrm{fin}| = 1$ is equivalent to $p$ being totally ramified, and this is not so uncommon when $[K:\Q]$ is small. Here, with $S = S_{K,2}$, the lemma applies for $72$ of the $85$ number fields in $\mathscr{K}$.

To illustrate the utility of Lemma \ref{lemma:infinite_place}, consider the ratio of the sizes of the search spaces for two bounds $B_1(K)$ and $B_2(K)$, given by
\[ R(K) = \frac{w^2 (2 B_1(K) + 1)^{2t}}{w^2 (2 B_2(K) + 1)^{2t}} \approx \left(\frac{B_1(K)}{B_2(K)} \right)^{2t}.\]
This quantifies the potential savings when the better bound may be used. For $S = S_{K,2}$, Figure \ref{fig:ratiochart} plots the savings $R(K)$ against the root discriminant of $K$ for the $72$ fields $K$ in $\mathscr{K}$ for which $\left| S_\mathrm{fin}\right| = 1$.
\begin{figure}[ht!]
  \includegraphics[width = .99\textwidth]{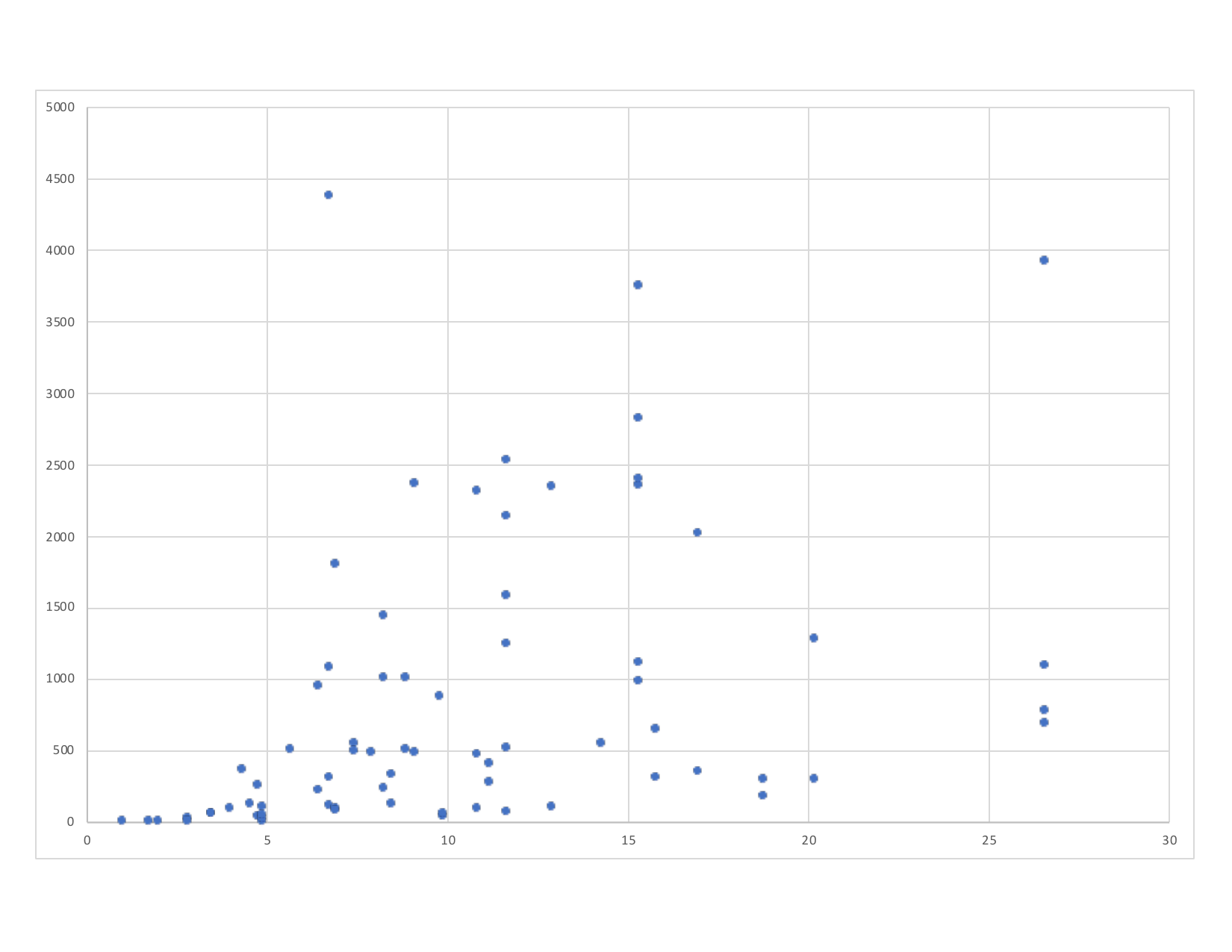}
  \caption{$R(K)$ (vertical axis) versus $\Delta_K^{1/d_K}$ (horizontal axis) for $K \in \mathscr{K}$, $S = S_{K,2}$, and $\left| S_\mathrm{fin} \right| = 1$.}
  \label{fig:ratiochart}
\end{figure}

\section{Applications}\label{sec:applications}

A major application of solving $S$-unit equations is in enumerating solutions to Shafarevich-type problems, for example finding complete lists of curves of a given type with particular reduction properties. The blueprint for this implementation came from Smart's 1997 enumeration of all genus 2 curves over $\mathbb{Q}$ with good reduction away from $p=2$ \cite{Smart:1997}, building off earlier work with Merriman \cite{Merriman-Smart:1993}.  In 2017, Malmskog and Rasmussen used these methods to determine all Picard curves defined over $\mathbb{Q}$ with good reduction away from $p=3$ \cite{Malmskog-Rasmussen:2016}. The same year, Koutsianas produced a new algorithm that uses solutions to the $S$-unit equation to find all elliptic curves over an arbitrary number field having good reduction outside $S$ \cite{Koutsianas:2017}. In the remainder of this article, we provide some new applications of the implementation.

\subsection{Asymptotic Fermat}

Let $K/\Q$ be a number field. We consider the nontrivial solutions $(a, b, c) \in K^3$ to the Fermat equation:
\[ \mathcal{C}_p: a^p + b^p + c^p = 0, \qquad \qquad abc \neq 0, \quad \text{$p > 3$ a prime}. \]
For fixed $p$, it follows from the work of Faltings that $\mathcal{C}_p(K)$ is finite, but it is reasonable to ask whether $\bigcup_p \mathcal{C}_p(K)$ is finite or infinite. Finiteness is equivalent to the condition that $\mathcal{C}_p(K) = \varnothing$ for sufficiently large $p$. We say $K$ satisfies \emph{asymptotic Fermat} if there exists a bound $B_K$ such that $p > B_K$ implies $\mathcal{C}_p(K) = \varnothing$.

There are several number fields $K$ known to satisfy asymptotic Fermat: Jarvis-Meekin \cite{Jarvis-Meekin:2004} demonstrate that $K = \Q(\sqrt{2})$ satisfies asymptotic Fermat with $B_K = 4$. Freitas-Siksek give an explicit family of real quadratic fields of density $\geq \frac{5}{6}$ which satisfy asymptotic Fermat. They also report that the real quartic field, $K = \Q(\sqrt{2 + \sqrt{2}})$ satisfies asymptotic Fermat.

In \cite{Freitas-Siksek:2015}, Freitas and Siksek find a condition on a totally real field $K$ which guarantees that $K$ satisfies asymptotic Fermat. For the remainder, suppose $K$ is totally real. Define
\begin{align*}
  S & = \{ \mathfrak{p} : \mathfrak{p} \text{ a nonzero prime ideal of } \OK \text{ which divides }2 \}, \\
  T & = \{ \mathfrak{p} \in S : f_{\mathfrak{p}} = 1 \}.
\end{align*}
\begin{theorem}[Freitas-Siksek]\label{thm:FS}
  Let $K/\Q$ be a totally real number field, with either $[K:\Q]$ odd or $T$ nonempty. Suppose that for every solution $(\tau_1, \tau_2)$ to the $S$-unit equation, there is some $\mathfrak{p} \in T$ such that $\max \{ |\ord_\mathfrak{p}(\tau_1)|, |\ord_\mathfrak{p}(\tau_2)|\} \leq 4 \ord_\mathfrak{p}(2)$. Then $K$ satisfies asymptotic Fermat.
\end{theorem}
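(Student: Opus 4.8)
The plan is to run the modular method of Wiles in the form developed over totally real fields, reproducing the argument of Freitas and Siksek. Fix a hypothetical nontrivial solution $(a,b,c)$ of $\mathcal{C}_p$ with $p$ large. First I would clear denominators and common factors so that $a,b,c$ generate pairwise coprime integral ideals, and then use the hypothesis that $[K:\Q]$ is odd or $T\neq\varnothing$ to permute $(a,b,c)$ into a normal form at the primes above $2$; the parity of $[K:\Q]$, respectively the existence of a degree-one prime above $2$, is precisely what makes such a normalization possible by a residue-counting argument. To the normalized triple attach the Frey curve
\[ E = E_{a,b,c}\colon\quad Y^2 = X(X - a^p)(X + b^p). \]
Its three $2$-torsion points $(0,0)$, $(a^p,0)$, $(-b^p,0)$ are $K$-rational, so $\bar\rho_{E,2}$ is trivial. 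Tate-curve computations show $E$ is semistable away from $S$, that $p \mid \ord_{\mathfrak{q}}(\Delta_E)$ for every prime $\mathfrak{q}\nmid 2$ dividing $abc$, and that the conductor exponent of $E$ at each $\mathfrak{p}\in S$ is bounded independently of the solution. Hence, once $\bar\rho_{E,p}$ is known to be irreducible, level lowering over totally real fields (Fujiwara, Jarvis, Rajaei) gives $\bar\rho_{E,p}\cong\bar\rho_{f,\mathfrak{P}}$ for a Hilbert newform $f$ over $K$ of parallel weight $2$ and level dividing a fixed ideal supported on the primes above $2$. There are only finitely many such $f$, and this set is independent of $p$ and of the solution.

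Next I would eliminate each of the finitely many newforms $f$. If the Hecke eigenvalue field $\Q_f$ is strictly larger than $\Q$, pick an auxiliary prime $\mathfrak{q}\notin S$ with $a_\mathfrak{q}(f)\notin\Q$ (possible by Chebotarev, since $\Q_f\neq\Q$); then $a_\mathfrak{q}(E)\in\Z$ ranges over a fixed finite set, $\mathfrak{P}\mid\bigl(a_\mathfrak{q}(E)-a_\mathfrak{q}(f)\bigr)$, and the norm of the right-hand side is a fixed nonzero integer, so $p$ is bounded. If $\Q_f=\Q$, then $f$ corresponds to an elliptic curve $E'/K$ of conductor dividing the fixed level, with $\bar\rho_{E',p}\cong\bar\rho_{E,p}$; this Eichler--Shimura statement is unconditional when $[K:\Q]$ is odd, and in the remaining case follows from the shape of the level together with $T\neq\varnothing$ (a Steinberg prime in $T$ makes the Shimura-curve construction available). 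Since $\bar\rho_{E',2}\cong\bar\rho_{E,2}$ is trivial (recall $p>3$), $E'$ has full rational $2$-torsion and good reduction outside $S$, so, up to the transformations in its solution cycle, $E'$ is a Legendre curve $Y^2 = X(X-1)(X-\lambda)$ with $\lambda,\,1-\lambda\in\OKS^\times$; that is, $(\lambda,1-\lambda)$ is a solution of the $S$-unit equation. Here the hypothesis enters: it supplies a prime $\mathfrak{p}\in T$ at which $\max\{|\nu_\mathfrak{p}(\lambda)|,|\nu_\mathfrak{p}(1-\lambda)|\}\leq 4\nu_\mathfrak{p}(2)$, a bound independent of $p$; this controls $\nu_\mathfrak{p}(j_{E'})$, hence the local behavior of $\bar\rho_{E',p}$ at $\mathfrak{p}$, for $p$ large. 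Comparing this with the (independently computed) local behavior of $\bar\rho_{E,p}$ at $\mathfrak{p}$ forces an incompatibility of reduction types --- unless $p$ is bounded. Taking the maximum over all $f$ of the resulting bounds on $p$ gives the constant $B_K$.

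The main obstacle, and the step I expect to need the most care, is the irreducibility of $\bar\rho_{E,p}$ for large $p$, uniformly in the solution: uniform boundedness of torsion and uniform irreducibility of mod $p$ representations are not available over an arbitrary totally real field, so one must exploit the special features of the Frey curve --- its full rational $2$-torsion and its semistability outside $S$ --- to rule out a reducible $\bar\rho_{E,p}$ (again via an isogeny-character argument whose output is itself constrained by the $S$-unit equation). A secondary difficulty is making the Eichler--Shimura step and the local analysis at $\mathfrak{p}\in T$ rigorous when $[K:\Q]$ is even, which is exactly why the hypothesis ``$[K:\Q]$ odd or $T$ nonempty'' is imposed; a third is the bookkeeping that converts the valuation inequality at $T$ into precise statements about conductor exponents and Steinberg behavior at primes above $2$. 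All of this is carried out in \cite{Freitas-Siksek:2015}, of which the present theorem is a restatement.
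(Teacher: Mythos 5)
The paper does not prove this theorem; it is quoted (in weakened form) from \cite{Freitas-Siksek:2015}, and the only ``proof'' the paper offers is a one-paragraph expository sketch after the remark, plus the citation itself. Your proposal correctly recognizes this and reconstructs the Freitas--Siksek argument rather than inventing a new one, which is the right thing to do for a restated theorem.

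As a reconstruction, your outline is substantively faithful to \cite{Freitas-Siksek:2015}: normalizing the triple at primes above $2$ (which is where the ``$[K:\Q]$ odd or $T\neq\varnothing$'' hypothesis first enters), attaching the Frey curve with full rational $2$-torsion, level-lowering via Fujiwara--Jarvis--Rajaei to a fixed finite set of Hilbert newforms, using the Jacquet--Langlands/Eichler--Shimura dichotomy (odd degree or a Steinberg prime in $T$), passing from the rational-eigenvalue newforms to a Legendre curve $E'$, and finally extracting the $S$-unit constraint that the stated hypothesis controls. You also correctly flag the irreducibility of $\bar\rho_{E,p}$ as the delicate uniformity point. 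The paper's own sketch is slightly vaguer --- it speaks of ``an elliptic curve $E/K$ related to, but distinct from, the Frey curve'' (which is your $E'$) and the ``delicate analysis of how these constraints interact'' --- but there is no conflict between the two accounts.

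One small caution: the paper's hypothesis reads $\max\{|\nu_\mathfrak{p}(\lambda)|, \nu_\mathfrak{p}(\mu)|\}\leq 4\nu_\mathfrak{p}(2)$, and the mechanism by which this bound feeds into the contradiction is specifically a comparison of $\nu_\mathfrak{p}(j_{E'})$ against $\nu_\mathfrak{p}(j_E)$ at $\mathfrak{p}\in T$ via Tate uniformization (the Frey curve has $\nu_\mathfrak{p}(j_E)$ essentially linear in $p$, while $j_{E'}$ is bounded). Your phrase ``forces an incompatibility of reduction types'' gestures at this but does not make the Tate-parameter comparison explicit; in the actual Freitas--Siksek argument, this valuation comparison mod $p$ is the precise lever. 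Since the paper itself treats the theorem as a black box, this level of detail is fine for the present purpose.
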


\begin{remark}
  We note that Freitas-Siksek's result is actually stronger, and they provide additional conditions under which $K$ must satisfy asymptotic Fermat. Also, more recent work of \c{S}eng\"{u}n-Siksek \cite{Sengun-Siksek:2018} provides similar criteria for arbitrary number fields. However, the above formulation is sufficient for our application.
\end{remark}
The reader may recall that Wiles's classic proof of Fermat's Last Theorem proceeds by taking a hypothetical solution $(a,b,c)$ and noting that the associated Frey elliptic curve is forced to satisfy an impossible set of  constraints (that the curve is not modular). Freitas and Siksek's approach is similar. Given a solution to $\mathcal{C}_p$ over $K$, they produce an elliptic curve $E/K$ (related to, but distinct from, the Frey curve) whose $j$-invariant is arithmetically constrained. However, the $j$-invariant is determined by the $\lambda$-invariants of $E$; these $\lambda$ are guaranteed to arise as solutions to the $S$-unit equation over $K$. The result above follows from a delicate analysis of how these constraints interact.

We report a new list of cubic number fields $K/\Q$ which satisfy asymptotic Fermat. Using the implementation of the algorithm described in this paper, we find all solutions to the $S$-unit equation ($S$ as above), and verify the condition of Freitas-Siksek (this last step is trivial once all solutions have been determined).

Let $\mathscr{K}_X$ denote the set of totally real cubic number fields in which $2$ is totally ramified and which have absolute discriminant $\Delta_K$ satisfying $|\Delta_K| \leq X$. Table \ref{table:our_nfs} lists all the fields of $\mathscr{K}_X$ for $X = 2000$. For each $K \in \mathscr{K}_X$, we solved the appropriate $S$-unit equation, and by applying Theorem \ref{thm:FS}, verified that $K$ satisfies asymptotic Fermat. Our results are not effective, as Theorem \ref{thm:FS} does not provide the bound $B_K$.

For each $K \in \mathscr{K}_{2000}$, $f_K$ denotes a minimal polynomial for $K/\Q$; $\Delta_K$ is the absolute discriminant of $K$. Because $2$ is totally ramified, Lemma \ref{lemma:infinite_place} guarantees that every solution cycle will contain a solution with the extremal place $\mathfrak{p}_\ell$ infinite. Consequently, each solution cycle will contain at least one solution $(\tau_1, \tau_2)$ satisfying
\[ \tau_i = \uprho^{\mathbf{a}_i}, \qquad \left| \mathbf{a}_i \right| \leq K_1^\mathrm{LLL}. \]
(Finding the remaining solutions in the solution cycle is trivial even if they do not satisfy this bound.)

Finally, $N(S,K)$ indicates the number of distinct solutions $(\tau_1, \tau_2)$ to the $S$-unit equation found. (These are \emph{unordered} solutions, so that $(\tau_1, \tau_2)$ and $(\tau_2, \tau_1)$ are not considered distinct.) The reader should note that the two trivial solutions over $\Q$, $(-1,2)$ and $(\frac{1}{2}, \frac{1}{2})$, are counted in each field $K$.

\begin{table}[t!]\label{table:our_nfs}
  \caption{Fields in $\mathscr{K}_{2000}$ and number of $S$-unit equation solutions}
  \begin{tabular}{llcc}
    \toprule
    \multicolumn{1}{c}{$f_K$} & \multicolumn{1}{c}{$\Delta_K$} & $K_1^{\mathrm{LLL}}$ & $N(S,K)$  \\
    \midrule
    $x^3 - x^2 - 3x + 1$  & $2^2 \cdot 37$           & $225$ & $53$      \\
    \midrule
    $x^3 - x^2 - 5x - 1$  & $2^2 \cdot 101$          & $175$ & $11$     \\
    \midrule
    $x^3 - x^2 - 5x + 3$  & $2^2 \cdot 3 \cdot 47$   & $156$ & $5$      \\
    \midrule
    $x^3 - 6x - 2$        & $2^2 \cdot 3^3 \cdot 7$  & $161$ & $5$      \\
    \midrule
    $x^3 - x^2 - 7x - 3$  & $2^2 \cdot 197$          & $156$ & $8$      \\
    \midrule
    $x^3 - 8x - 6$        & $2^2 \cdot 269$          & $176$ & $8$     \\
    \midrule
    $x^3 - 10x - 10$      & $2^2 \cdot 5^2 \cdot 13$ & $156$ & $8$     \\
    \midrule
    $x^3 - x^2 - 7x + 5$  & $2^2 \cdot 349$          & $199$ & $8$     \\
    \midrule
    $x^3 - x^2 - 9x - 5$  & $2^2 \cdot 373$          & $162$ & $8$      \\
    \midrule
    $x^3 - x^2 - 7x + 1$  & $2^2 \cdot 3 \cdot 127$  & $180$ & $2$       \\
    \midrule
    $x^3 - x^2 - 9x + 11$ & $2^2 \cdot 389$          & $198$ & $8$   \\
    \midrule
    $x^3 - 12x - 14$      & $2^2 \cdot 3^4 \cdot 5$  & $164$ & $2$   \\
    \midrule
    $x^3 - 8x - 2$        & $2^2 \cdot 5 \cdot 97$   & $176$ & $5$   \\
    \bottomrule
  \end{tabular}
\end{table}

\subsection{Cubic Ramanujan-Nagell equations}

In 1913, Ramanujan conjectured that the only solutions of the Diophantine equation $x^2 + 7=2^n$ over the natural numbers satisfy $x \in \{1, 3, 5, 11, 181\}$ \cite{Ramanujan:1913}. This was settled in 1948 by Nagell \cite{Nagell:1961}. The more general family of equations,
\[
  Ax^2 + B = C^n, \qquad \qquad A, B, C \in \Z
\]
are called \emph{Ramanujan-Nagell equations}, and the literature for solving such equations is very rich (see for example \cite{Bugeaud-Shorey:2001,Bugeaud-Mignotte-Sikse:2006,Cohn:1993,Bennett-Skinner:2004}). Very recently \emph{cubic} Ramanujan-Nagell equations, have attracted the attention of mathematicians \cite{Bauer-Bennett:2018}. These are equations of the form
\[ f(x) = C^n, \qquad f(x) \in \Z[x], C \in \Z. \]
We consider the particular example
\begin{equation}\label{eq:cubicRN}
x^3 + 3^k = q^n, \qquad q > 3\ \text{prime}, \quad n, k > 0.
\end{equation}
If $q=2$, a more general version of \eqref{eq:cubicRN} is solved in \cite{Bauer-Bennett:2018}. Here, we prove the following theorem.
\begin{theorem}\label{thm:RamanujanNagell}
Let $q$ be a prime with $3 < q \leq \RNb$. All integer solutions of the cubic Ramanujan-Nagell equation \eqref{eq:cubicRN} with $k,n > 0$ are listed in Table \ref{table:cubicRN}.
\end{theorem}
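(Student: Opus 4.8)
The plan is to reduce the cubic Ramanujan--Nagell equation \eqref{eq:cubicRN} to a finite collection of $S$-unit equations, one for each relevant prime $q$, and then invoke the implementation described in this paper. First I would fix an odd prime $q \leq \RNb$ and consider a putative solution $(x, k, n)$ with $k, n \geq 0$. The key algebraic step is to factor the left-hand side over the field $K = \Q(\zt)$ (or over $\Q$ when it is more convenient, using $x^3 + 3^k = (x + 3^{k_0})(x^2 - 3^{k_0} x + 3^{2k_0})$ when $3 \mid k$, and handling $k \not\equiv 0 \pmod 3$ by absorbing powers of $3$ appropriately). Working in $\OK$ with $K = \Q(\zt)$, we have the factorization $x^3 + y^3 = (x+y)(x + \zt y)(x + \zt^2 y)$ with $y = 3^{k_0}$; one then analyzes the greatest common divisors of the three factors, which are supported only on the primes above $3$ and above $q$. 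This shows each factor, up to a bounded unit and a bounded power of the primes above $3$, is a product of primes above $q$, hence lies in $\OKS^\times$ for $S = S_\infty \cup \{\text{primes above } 3q\}$ (or a slightly larger explicitly controlled set).

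Next I would extract an $S$-unit equation. The standard trick is that the three conjugate factors $\alpha_i := x + \zt^i y$ satisfy a linear syzygy: since they are the values of a linear form at the three cube roots of unity, one has $(1-\zt)\alpha_0 + (\zt - \zt^2)\alpha_1 \cdot(\text{const}) = \ldots$ — more precisely, any two of the differences $\alpha_i - \alpha_j = (\zt^i - \zt^j)y$ are $3$-units times a bounded quantity, and $\alpha_0 + \zt\alpha_1 + \zt^2\alpha_2$-type identities among $\{\alpha_i\}$ produce a relation of the shape $u_1 \epsilon_1 + u_2 \epsilon_2 = 1$ with $\epsilon_i \in \OKS^\times$ and $u_i$ ranging over a finite, explicitly listable set (coming from the bounded $3$-parts and roots of unity). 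This is precisely the generalized $S$-unit equation $ax + by = 1$ mentioned in the introduction, which the paper notes can be solved by the same methods; alternatively one clears the constants to land on finitely many instances of $x + y = 1$. Having solved each resulting $S$-unit equation with the implementation, one recovers finitely many candidate values of the ratio $\alpha_i/\alpha_j$, hence of $x/y$ and thus (since $y = 3^{k_0}$) finitely many $(x, k)$, and then $n$ is determined by $q^n = x^3 + 3^k$. Running this over all odd primes $q \leq \RNb$ and collecting the surviving solutions yields Table \ref{table:cubicRN}.

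A few cases need separate attention and I would dispose of them first: the degenerate cases $x = 0$, $k = 0$, $n = 0$, and small $n$ (where $q^n$ is too small for the factorization argument to give anything, handled by direct search); the case $q = 3$, which is special because then the primes above $q$ and above $3$ coincide and the set $S$ degenerates (here the equation becomes $x^3 = 3^n - 3^k$ and is elementary); and the ramification bookkeeping at the prime $(1 - \zt)$ above $3$ in $\Q(\zt)$, which contributes the bounded exponents that must be enumerated when forming the finitely many $ax+by=1$ instances.

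The main obstacle I expect is not conceptual but computational: for each $q$ one must solve an $S$-unit equation over $\Q(\zt)$ (a degree-$2$ field, so $d_K = 2$) with $S$ containing the infinite place and the primes above $3$ and $q$ — so $|S_{\mathrm{fin}}| = 2$ or $3$ depending on the splitting of $q$ in $\Q(\zt)$, giving $S$-unit rank $t = 2$ or $3$. The Baker--Yu bounds from \S\ref{sec:initial_bound} and LLL reduction from \S\ref{sec:LLL_red} must be run, and the reduced bounds must be small enough that the sieve of \S\ref{sec:sieve} (or brute force) terminates, for every one of the roughly $95$ primes $q \leq \RNb$. Ensuring this is feasible uniformly — and in particular that no single $q$ produces an intractably large reduced bound or search space — is the real work; the bound $\RNb$ in the statement is presumably chosen exactly as the threshold up to which these computations were completed.
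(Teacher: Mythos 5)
Your overall strategy — factor, exploit coprimality of the conjugate factors at the primes above $q$, take a difference to produce an $S$-unit equation, then bound and enumerate — is the same as the paper's. However, the choice of field is wrong in a way that breaks the argument for most $k$. You propose to work over $K=\Q(\zt)$ and factor $x^3+y^3=(x+y)(x+\zt y)(x+\zt^2 y)$ with $y=3^{k_0}$, which requires $3\mid k$; your plan to handle $k\not\equiv 0\pmod 3$ ``by absorbing powers of $3$ appropriately'' does not actually work, since $x^3+3\cdot 3^{3k_0}$ (for instance) is not a sum of two cubes in $\Q(\zt)$ and there is no change of variables in $x$ alone that makes it one. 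The paper instead works over $L$, the splitting field of $x^3+3$ (a degree-$6$ field containing both $\zt$ and $\rp=\sqrt[3]{3}$), where the factorization $x^3+3^k=(x+\rp^k)(x+\zt\rp^k)(x+\zt^2\rp^k)$ is uniform in $k$.

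This change of field also alters the computational picture: instead of a degree-$2$ field with $S$-unit rank $2$ or $3$, one has a degree-$6$ field with one prime above $3$ (totally ramified), up to six primes above $q$, and unit rank $2$, so the $S$-unit group rank is noticeably larger than you estimate. Finally, the paper's closing step is slightly more streamlined than yours: after subtracting two of the factor identities one gets a single genuine $S$-unit equation $u-v=1$ over $L$ (not a family of $ax+by=1$ equations), and a short lemma converts the exponent bound $B$ for its solutions directly into explicit bounds $k\le c_3 B$ and $n\le c_q B$ via $\ord_{\fp}$ and $\ord_{\fq}$ of the generators $\rho_i$, so that $k$ and $n$ are bounded (and then enumerated), rather than ``determined'' as you suggest. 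With the field corrected to $L$ and this bounding lemma supplied, your outline matches the paper's proof.
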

Our method also works for the equation $x^3 + p^k = q^n$, where $p,q$ are different odd primes, and the proof is similar to the case $p=3$.

\begin{table}[!ht]\label{table:cubicRN}
  \caption{Solutions to \eqref{eq:cubicRN} with $3 < q \leq 500$.}
  \begin{tabular}{rrrrcrrrr}
    \toprule
    $q$ & $x$ & $k$ & $n$ & \null \quad \null & $q$ & $x$ & $k$ & $n$ \\
    \midrule
    $11$ & $2$ & $1$ & $1$ && $73$ & $4$ & $2$ & $1$ \\
    \midrule
    $17$ & $-4$ & $4$ & $1$ && $89$ & $2$ & $4$ & $1$ \\
    \midrule
    $17$ & $2$ & $2$ & $1$ && $179$ & $-4$ & $5$ & $1$ \\
    \midrule
    $19$ & $-2$ & $3$ & $1$ && $251$ & $2$ & $5$ & $1$ \\
    \midrule
    $67$ & $4$ & $1$ & $1$ && $307$ & $4$ & $5$ & $1$ \\
    \midrule
    $73$ & $-2$ & $4$ & $1$ \\
    \bottomrule
  \end{tabular}
\end{table}

\begin{proof}
  Let $K$ be the splitting field for $f(x) = x^3 + 3$. We observe $K$ is unramified outside $\{3,\infty\}$. In fact, $K$ has class number $1$ and is totally ramified at $3$. Let $\mathfrak{p} = \pi \OK$ be the unique prime in $K$ above $3$. Let $S$ be the set of all places of $K$ above $3$, $q$, or $\infty$.

  Suppose $(q,x,k,n)$ is a solution to \eqref{eq:cubicRN}. Let $\beta$ be a root of $f(x)$, and let $\zeta$ denote a primitive cube root of unity. Define
  \[ \alpha_i := (x + \zeta^i \beta^k), \quad \mathfrak{a}_i := \alpha_i \OK, \quad 0 \leq i \leq 2. \]
  Then we must have $\alpha_0 \alpha_1 \alpha_2 = q^n$ and $\mathfrak{a}_0 \mathfrak{a}_1 \mathfrak{a}_2 = q^n\OK$. For $i \neq j$,
  \[ \alpha_i - \alpha_j  = \zeta^i (1-\zeta^{i-j}) \beta^k \in \mathfrak{p}^{2k+3}. \]
Since $(3,q) = 1$, we see $\ord_\mathfrak{p} \alpha_i = 0$ for each $i$. Also, it follows that the $\mathfrak{a}_i$ are pairwise coprime. Thus, if $\mathfrak{q} \mid q\OK$, then exactly one $\mathfrak{a}_i$ is divisible by $\mathfrak{q}$, and $\ord_\mathfrak{q} \alpha_i = n$. Now fix $i' \in \{0,1,2\}$ so that $\ord_\mathfrak{q} \alpha_{i'} = n$ for at least one $\mathfrak{q} \mid q$. Choose $j' \neq i'$ and set
\[ \tau_1 := \frac{\alpha_{i'}}{\alpha_{i'} - \alpha_{j'}}, \quad \tau_2 := \frac{-\alpha_{j'}}{\alpha_{i'} - \alpha_{j'}}. \]
Then $(\tau_1, \tau_2)$ is a solution to the $S$-unit equation and $\ord_\mathfrak{q} \tau_1 = n$ for some $\mathfrak{q} \mid q$. Choose a root of unity $\rho_0$ and a basis $\rho_1, \dots, \rho_t$ for the torsion-free part of $\OKS^\times$. Choose $b_{i,j} \in \Z$ such that
\[ \tau_i = \prod_{j=0}^t \rho_j^{b_{i,j}}. \]
There exists $B$ such that $|b_{i,j}| \leq B$. Define
\[ c_3 := \sum_{j=1}^t \left| \ord_\mathfrak{p} \rho_j \right|, \qquad c_\mathfrak{q} := \sum_{j=1}^t \left| \ord_\mathfrak{q} \rho_j \right|, \]
and set $c_q := \max \{c_\mathfrak{q} : \mathfrak{q} \mid q \}$. By design,
\[ |2k+3| \leq c_3 B, \qquad |n| \leq c_q. \]
With these bounds established, the solutions to \eqref{eq:cubicRN} may now be determined by exhaustion.
\end{proof}
As a final remark, we observe that we may choose $\brho$ so that $c_3 = c_q = 1$. Let $\mathfrak{q}_1,\dots,\mathfrak{q}_g$ be the prime ideals in $K$ above $q$. As $\OK$ is a PID, we may choose $\lambda_i \in \OK$ such that $\mathfrak{q}_i = \lambda_i \OK$. Let $\xi_1, \xi_2$ generate the torsion-free part of $\OK^\times$. The choice $\brho = [\rho_0, \xi_1, \xi_2, \pi, \lambda_1, \dots, \lambda_g]$ now gives $c_3 = c_q = 1$.


\end{document}